\newlength\figureheight 
\newlength\figurewidth 
\pgfplotsset{compat=newest}
\pgfplotsset{plot coordinates/math parser=false}
\newtheoremstyle{specialcasestyle}{1mm}{1mm}{\upshape}{}{\bfseries\upshape}{.}{0mm}{}
\theoremstyle{specialcasestyle}
\newtheorem{assump}{Assumption}
\newtheorem{prop}{Proposition}
\newtheorem{rem}{Remark}
\newtheorem{thm}{Theorem}
\theoremstyle{remark}          
\newcommand*{\Var}[1]{\ensuremath{\mathrm{Var}\left[#1\right]}}
\newcommand*{\E}[1]{\ensuremath{\mathbb{E}\left[#1\right]}}
\newcommand*{\prob}[1]{\ensuremath{\mathbb{P}\left[#1\right]}}
\newcommand*{\tol}{\ensuremath{\mathrm{TOL}}}
\newcommand\eqdef{\stackrel{\mathclap{\tiny\mbox{def}}}{=}}
\begin{document}

\title{{Multilevel Importance Sampling for Rare Events Associated With the McKean--Vlasov Equation}}
\author{Nadhir Ben Rached  \thanks{Department of Statistics, School of Mathematics, University of Leeds, UK ({\tt N.BenRached@leeds.ac.uk}).}, Abdul-Lateef Haji-Ali \thanks{Department of Actuarial Mathematics and Statistics, School of Mathematical and Computer Sciences, Heriot-Watt University, Edinburgh, UK ({\tt A.HajiAli@hw.ac.uk}).}, Shyam Mohan Subbiah Pillai \thanks{Corresponding author; Chair of Mathematics for Uncertainty Quantification, Department of Mathematics, RWTH Aachen University, Aachen, Germany({\tt subbiah@uq.rwth-aachen.de}).} \\
and Ra\'ul Tempone  \thanks{Computer, Electrical and Mathematical Sciences \& Engineering Division (CEMSE), King Abdullah University of Science and Technology (KAUST), Thuwal, Saudi Arabia ({\tt raul.tempone@kaust.edu.sa}). Alexander von Humboldt Professor in Mathematics for Uncertainty Quantification, RWTH Aachen University, Aachen, Germany ({\tt tempone@uq.rwth-aachen.de}).}.
        \thanks{This work was supported by the KAUST Office of Sponsored Research (OSR) under Award No. URF/1/2584-01-01 and the Alexander von Humboldt Foundation. This work was also partially performed as part of the Helmholtz School for Data Science in Life, Earth and Energy (HDS-LEE) and received funding from the Helmholtz Association of German Research Centres. For the purpose of open access, the author has applied a Creative Commons Attribution (CC BY) license to any author-accepted manuscript version arising from this submission.}
}
\date{}
\maketitle
\thispagestyle{empty}
\begin{abstract}
	This work combines multilevel Monte Carlo (MLMC) with importance sampling to estimate rare-event quantities that can be expressed as the expectation of a Lipschitz observable of the solution to {a broad class of McKean--Vlasov stochastic differential equations}. We extend the double loop Monte Carlo (DLMC) estimator introduced in this context in \citep{my_paper} to the multilevel setting. We formulate a novel {multilevel DLMC} estimator and perform a comprehensive {cost-error} analysis yielding new and improved complexity results. Crucially, we devise an antithetic sampler to estimate level differences guaranteeing reduced {computational complexity} for the {multilevel DLMC} estimator compared with the single-level DLMC estimator. To address rare events, we apply the importance sampling scheme, obtained via stochastic optimal control in~\citep{my_paper}, over all levels of the {multilevel DLMC} estimator. Combining importance sampling and {multilevel DLMC} reduces {computational complexity} by one order and drastically reduces the associated constant compared to the single-level DLMC estimator without importance sampling. We illustrate the effectiveness of the proposed {multilevel DLMC} estimator on the Kuramoto model from statistical physics with Lipschitz observables, confirming the reduced complexity from $\order{\tol_{\mathrm{r}}^{-4}}$ for the single-level DLMC estimator to $\order{\tol_{\mathrm{r}}^{-3}}$ while providing a feasible estimate of rare-event quantities up to prescribed relative error tolerance $\tol_{\mathrm{r}}$. 
\end{abstract}
\textbf{Keywords:} McKean--Vlasov stochastic differential equation, importance sampling, multilevel Monte Carlo, decoupling approach, double loop Monte Carlo. \\
\textbf{2010 Mathematics Subject Classification} 60H35. 65C30. 65C05. 65C35.

\section{Introduction}
\label{sec:intro}

We consider the estimation of rare-event quantities expressed as an expectation of some observable of the solution to {a broad class of McKean--Vlasov stochastic differential equations (MV-SDEs)}. In particular, we develop a computationally efficient multilevel Monte Carlo (MLMC) estimator for $\E{G(X(T))}$, where $G:\mathbb{R}^d \rightarrow \mathbb{R}$ is a Lipschitz function and $X:[0, T]\cross\Omega \rightarrow \mathbb{R}^d$ is the MV-SDE process up to a finite terminal time $T$. {We consider $G$ to be a rare-event observable, meaning that the support of $G$ lies deep in the tails of the distribution of $X(T)$.} MV-SDEs are a special class of SDEs whose drift and diffusion coefficients are a function of the law of the solution~\citep{mckean_vlasov}. Such equations arise from the mean-field behavior of stochastic interacting particle systems used in diverse applications~\citep{pedestrian_app,animal_app,chemical_app,combustion_app,biology_app,finance_app}. Significant recent literature has addressed the analysis~\citep{mvsde_existence,mvsde_pde,mvsde_smoothpde} and numerical treatment~\citep{mlmc_mvsde,mvsde_timescheme,mvsde_iterative,mvsde_cubature} of MV-SDEs. MV-SDEs are often approximated using stochastic $P$-particle systems, sets of $P$ coupled $d$-dimensional SDEs that approach a mean-field limit as the number of particles tends to infinity~\citep{mean_field_limit}. The associated Kolmogorov backward equation is $P d$-dimensional; hence, Monte Carlo (MC)  methods are used to approximate expectations associated with particle systems. MC methods using Euler--Maruyama time-discretized particle systems for bounded, Lipschitz drift/diffusion coefficients have been proposed for smooth, nonrare observables with a complexity of $\order{\tol^{-4}}$ for a prescribed error tolerance $\tol$~\citep{ogawa1992monte,mlmc_mvsde,euler_mvsde}. 

{MLMC} was introduced as an improvement to MC for SDEs in ~\citep{mlmc_sde}. {MLMC} is based on generalized control variates and improves efficiency when an approximation of the solution is computed based on a discretization parameter~\citep{giles_mlmc}. Most {MLMC} simulations are performed at cheaper, coarse levels, with relatively few simulations applied at the costlier, fine levels. {MLMC} for particle systems has been widely investigated~\citep{mlmc_mvsde,mlmc_mvsde_2,mlmc_mvsde_3,mlmc_mvsde_4}. In particular, an {MLMC} estimator with a partitioning sampler achieved a {computational complexity} of $\order{\tol^{-3}}$ for smooth observables and bounded, Lipschitz drift/diffusion coefficients~\citep{mlmc_mvsde}. However, MC and {MLMC} methods become extremely expensive in the context of rare events due to the ‘blowing up’ of the constant associated with the estimator complexity as the event becomes rarer~\citep{is_general_ref}. This motivates using importance sampling as a variance reduction technique to overcome the failure of standard MC and {MLMC} in the rare-event regime~\citep{is_general_ref}. 

Importance sampling for MV-SDEs has been studied in~\citep{is_mvsde,my_paper}. The decoupling approach developed by~\citep{is_mvsde} defines a modified, decoupled MV-SDE with coefficients computed using a realization of the MV-SDE law estimated beforehand using a stochastic particle system. A change of measure is applied to the decoupled MV-SDE, decoupling importance sampling from the law estimation. The theory of large deviations and Pontryagin principle were employed in~\citep{is_mvsde} to obtain a deterministic, time-dependent control minimizing a proxy for the estimator variance. \citep{my_paper} employed the same decoupling approach to define a double loop MC (DLMC) estimator and employ stochastic optimal control to derive a time- and state-dependent control minimizing the variance of the importance sampling estimator. In~\citep{my_paper}, an adaptive DLMC algorithm with a complexity of $\order{\tol^{-4}}$ was developed, the same as that for the MC estimator for nonrare observables in~\citep{mlmc_mvsde}, while enabling feasible estimates for rare-event probabilities. The development of such an importance sampling scheme using stochastic optimal control theory has been proposed before in other contexts, including standard SDEs~\citep{is_entropy,is_diffusions,is_cross_entropy}, stochastic reaction networks~\citep{soc_srn}, and discrete-time continuous-space Markov chains~\citep{soc_sumrvs,is_markov_chains}.

We combine importance sampling with {MLMC} to reduce the relative estimator variance in the rare-event regime by extending the results in~\citep{my_paper} to the multilevel setting. Combining importance sampling with {MLMC} has been previously explored in other contexts, including standard SDEs~\citep{mlmcis_alaya,mlmcis_kebaier,mlmcis_giles} and stochastic reaction networks~\citep{mlmcis_srn}. We extend the DLMC estimator from~\citep{my_paper} to the multilevel setting by introducing a {multilevel DLMC} estimator along with a detailed error and complexity analysis. We boost the efficiency of the {multilevel DLMC} estimator by developing a highly correlated, antithetic sampler for level differences~\citep{mlmc_mvsde,mlmc_antithetic}. We show reduced {computational complexity} using the {multilevel DLMC} estimator compared with the DLMC estimator for MV-SDEs. Then, we propose an importance sampling scheme for this estimator based on variance minimization using stochastic optimal control theory~\citep{my_paper} to address rare events and apply the obtained control on all levels. Contributions of this paper are summarized as follows.

\begin{itemize}
	\item We extend the DLMC estimator introduced by~\citep{my_paper} to the multilevel setting and propose a {multilevel DLMC} estimator for the decoupling approach~\citep{is_mvsde} for MV-SDEs. We include a detailed discussion on the bias and variance of the proposed estimator and devise a complexity theorem, displaying improved complexity compared with the naïve DLMC estimator. We also formulate a robust {multilevel DLMC} algorithm to determine optimal parameters adaptively.
	\item We develop naïve and antithetic correlated samplers for level differences in the {multilevel DLMC} estimator. Numerical simulations confirm increased variance convergence rates for level-difference estimators using the antithetic sampler compared with the naïve one, leading to improved {multilevel DLMC} estimator complexity.
	\item We propose combining importance sampling with the {multilevel DLMC} estimator to address rare events. We employ the time- and state-dependent control developed by~\citep{my_paper} for all levels in the {multilevel DLMC} estimator. Numerical simulations confirm a significant variance reduction in the {multilevel DLMC} estimator due to this importance sampling scheme, improving estimator complexity from $\order{\tol_\mathrm{r}^{-4}}$ in~\citep{my_paper} for the Kuramoto model with Lipschitz observables to $\order{\tol_\mathrm{r}^{-3}}$ in the multilevel setting while allowing feasible rare-event quantity estimation up to the prescribed relative error tolerance $\tol_\mathrm{r}$.
\end{itemize}

The remainder of this paper is structured as follows. In Section~\ref{sec:mvsde}, we introduce the MV-SDE and associated notation, motivate MC methods to estimate expectations associated with its solution and set forth the problem to be solved. In  {Section~\ref{sec:dlmc}, we introduce the decoupling approach for MV-SDEs~\citep{is_mvsde} and formulate a DLMC estimator.} Next, we state the optimal importance sampling control for the decoupled MV-SDE derived using stochastic optimal control and introduce the DLMC estimator with importance sampling from~\citep{my_paper} in Section~\ref{sec:is_dlmc}. Then, we  introduce the novel {multilevel DLMC} estimator in Section~\ref{sec:mldlmc}, develop an antithetic sampler for it, and derive new complexity results for the estimator. We combine the {multilevel DLMC} estimator with the proposed importance sampling scheme and develop an adaptive {multilevel DLMC} algorithm that feasibly estimates rare-event quantities associated with MV-SDEs. Finally, we apply the proposed methods to the Kuramoto model from statistical physics in Section~\ref{sec:results} and numerically verify all assumptions in this work and the derived complexity rates for the {multilevel DLMC} estimator for two observables. 


\section{McKean--Vlasov Stochastic Differential Equation}
\label{sec:mvsde}

{In this work, we consider a broad class of McKean--Vlasov equations that arise from the mean-field limit of stochastic interacting particle systems with pairwise interaction kernels~\citep{mean_field_limit}.} We consider the probability space $\{\Omega,\mathcal{F},\{\mathcal{F}_t\}_{t \geq 0},P\}$, where $\mathcal{F}_t$ is the filtration of a standard Wiener process. For functions $b:\mathbb{R}^d \cross \mathbb{R} \longrightarrow \mathbb{R}^d$, $\sigma:\mathbb{R}^d \cross \mathbb{R} \longrightarrow \mathbb{R}^{d \cross d}$, $\kappa_1: \mathbb{R}^d \cross \mathbb{R}^d \longrightarrow \mathbb{R}$, and $\kappa_2: \mathbb{R}^d \cross \mathbb{R}^d \longrightarrow \mathbb{R}$, we consider the following It\^o SDE for the McKean--Vlasov stochastic process $X: [0,T] \cross \Omega \rightarrow \mathbb{R}^d$:
\begin{empheq}[left=\empheqlbrace, right = ,]{equation} 
    \label{eqn:mvsde}
    \begin{alignedat}{2}
        \dd X(t) &= b\left(X(t),\int_{\mathbb{R}^d} \kappa_1 (X(t),x) \mu_t(\dd x)\right) \dd t  \\
        &\qquad + \sigma \left(X(t),\int_{\mathbb{R}^d} \kappa_2 (X(t),x) \mu_t(\dd x)\right) \dd W(t), \quad t>0 \\
        X(0) &= x_0 \sim \mu_0 \in \mathcal{P}(\mathbb{R}^d) , 
    \end{alignedat}
\end{empheq}
where $W:[0,T] \cross \Omega \rightarrow \mathbb{R}^d$ is a standard $d$-dimensional Wiener process with mutually independent components. $\mu_t \in \mathcal{P}(\mathbb{R}^d)$ is the probability distribution of $X(t)$, where $\mathcal{P}(\mathbb{R}^d)$ is the space of probability measures on $\mathbb{R}^d$. $x_0 \in \mathbb{R}^d$ is a random initial state with distribution $\mu_0 \in \mathcal{P}(\mathbb{R}^d)$. The functions $b$ and $\sigma$ are called drift and diffusion functions/coefficients, respectively. {The existence and uniqueness of solutions to \eqref{eqn:mvsde} follows from the results in~\citep{mvsde_existence,mvsde_soln_theory,mean_field_limit,mvsde_weak_soln}, under the assumptions therein. These involve certain differentiability and boundedness conditions on $b$, $\sigma$, $\kappa_1$, and $\kappa_2$.} The time evolution of $\mu_t$ is given by the multidimensional Fokker--Planck partial differential equation (PDE): 
\begin{empheq}[left=\empheqlbrace, right = ,]{alignat=2}
    \label{eqn:fokkerplanck_mvsde}
    &-\frac{\partial \mu(s,x;t,y)}{\partial s} - \sum_{i=1}^d \frac{\partial}{\partial x_i} \left(b_i\left(x,\int_{\mathbb{R}^d} \kappa_1(x,z) \mu(s,z;t,y) \dd z \right) \mu(s,x;t,y)\right) \nonumber \\
    &+ \sum_{i=1}^d \sum_{j=1}^d \frac{1}{2}\frac{\partial^2}{\partial x_i \partial x_j} \Bigg( \Bigg. \sum_{k=1}^d \sigma_{ik} \sigma_{jk} \left(x,\int_{\mathbb{R}^d} \kappa_2(x,z) \mu(s,z;t,y) \dd z \right) \nonumber \\
    &\qquad \mu(s,x;t,y) \Bigg. \Bigg) = 0, \quad (s,x) \in (t,\infty) \cross \mathbb{R}^d  \\
    &\mu(t,x;t,y) = \delta_y(x) ,  \nonumber
\end{empheq}
where $\mu(s,x;t,y)$ denotes the conditional distribution of $X(s)$ given that $X(t) = y$, and $\delta_y(\cdot)$ denotes the Dirac measure at $y$. Equation~\eqref{eqn:fokkerplanck_mvsde} is a nonlinear integral PDE with nonlocal terms. Solving such an equation using classical numerical methods up to relative error tolerances can be computationally prohibitive, particularly in higher dimensions ($d \gg 1$). 

A strong approximation of the solution to the above class of MV-SDEs can be obtained by solving a system of $P$ exchangeable It\^o SDEs, also known as a stochastic interacting particle system, with pairwise interaction kernels comprising $P$ particles~\citep{mean_field_limit}. For $p=1, \ldots, P$, we have the following SDE for the process $X^P_p:[0,T] \cross \Omega \rightarrow \mathbb{R}^d$:
\begin{empheq}[left=\empheqlbrace, right = ,]{alignat=2}
    \label{eqn:strong_approx_mvsde}
    \dd X^P_p(t) &= b\left(X^P_p(t), \frac{1}{P} \sum_{j=1}^P  \kappa_1(X^P_p(t),X^P_j(t)) \right) \dd t \nonumber \\
    &\qquad + \sigma\left(X^P_p(t), \frac{1}{P} \sum_{j=1}^P \kappa_2(X^P_p(t),X^P_j(t)) \right) \dd W_p(t), \quad t>0 \\
    X^P_p(0) &= (x_0)_p \sim \mu_0 \in \mathcal{P}(\mathbb{R}^d) , \nonumber
\end{empheq}
where $\{(x_0)_p\}_{p=1}^P$ are independent and identically distributed (i.i.d.) random variables sampled from the initial distribution $\mu_0$, and $\{W_p\}_{p=1}^P$ are mutually independent $d$-dimensional Wiener processes also independent of $\{(x_0)_p\}_{p=1}^P$. Equation~\eqref{eqn:strong_approx_mvsde} approximates the mean-field distribution $\mu_t$ from \eqref{eqn:mvsde} using an empirical distribution based on particles $\{X^P_p\}_{p=1}^P$:
\begin{equation}
    \label{eqn:emp_dist_law}
    \mu_t(\dd x) \approx \mu_t^P(\dd x) = \frac{1}{P} \sum_{j=1}^P \delta_{X^P_j(t)} (\dd x) , 
\end{equation}
where particles $\{X^P_p\}_{p=1}^P$ are identically distributed but not mutually independent due to pairwise interaction kernels in the drift and diffusion coefficients. {The strong convergence of particle systems follows from the results given in~\citep{mvsde_strong_conv_1,mvsde_strong_conv_2,mvsde_strong_conv_3}, under further assumptions on the drift and diffusion coefficients therein.} The high dimensionality of the Fokker--Planck equation, satisfied by the joint probability density of the particle system, motivates the use of MC methods, which do not suffer from the curse of dimensionality.

 


\subsection{Example: fully connected Kuramoto model for synchronized oscillators}  \label{sec:kuramoto}

We focus on a one-dimensional (1D) example of the MV-SDE in \eqref{eqn:mvsde}, called the Kuramoto model, which describes synchronization in statistical physics to help model the behavior of large sets of coupled oscillators. This model has widespread applications in chemical and biological systems \citep{chemical_app}, neuroscience \citep{neuroscience_app}, and oscillating flame dynamics \citep{combustion_app}. In particular, the Kuramoto model is a system of $P$ fully connected, synchronized oscillators. We consider the following Itô SDE for the process $X_p^P:[0,T] \cross \Omega \rightarrow \mathbb{R}$:
\begin{empheq}[left=\empheqlbrace, right = ,]{alignat=2}
    \label{eqn:kuramoto_model}
    \dd X^P_p(t) &= \left({\xi_p} + \frac{1}{P} \sum_{q=1}^P \sin\left(X^P_p(t) - X^P_q(t)\right)\right) \dd t + \sigma \dd W_p (t) , \quad t>0\\
    X^P_p(0) &= (x_0)_p \sim \mu_0 \in \mathcal{P}(\mathbb{R}) , \nonumber
\end{empheq}
where {$\{\xi_p\}_{p=1}^P$} denotes i.i.d. random variables sampled from a prescribed distribution. The diffusion $\sigma \in \mathbb{R}$ is constant, and $\{(x_0)_p\}_{p=1}^P$ represents i.i.d. random variables sampled from a prescribed distribution $\mu_0$. In addition, $\{W_p\}_{p=1}^P$ represents mutually independent 1D Wiener processes, and {$\{\xi_p\}_{p=1}^P$}, $\{(x_0)_p\}_{p=1}^P$, and $\{W_p\}_{p=1}^P$ are mutually independent. This coupled particle system reaches the mean-field limit as the number of oscillators tends to infinity. In this limit, each particle satisfies the following MV-SDE: 
\begin{empheq}[left=\empheqlbrace, right = ,]{equation}
    \label{eqn:kuramoto_mvsde}
    \begin{alignedat}{2}
        \dd X(t) &= \left({\xi} + \int_{\mathbb{R}} \sin (X(t)-x) \mu_t (\dd x) \right) \dd t + \sigma \dd W(t), \quad t>0 \\
        X(0) &= x_0 \sim \mu_0 \in \mathcal{P}(\mathbb{R}) , 
    \end{alignedat}
\end{empheq}
where $X(t)$ denotes the state of each particle at time $t$, {$\xi$} represents a random variable sampled from some prescribed distribution, and $\mu_t$ is the probability distribution of $X(t)$. This example is used throughout this work as a test case for the proposed MC algorithms.

\subsection{Problem setting}
\label{sec:setting}

We let $T>0$ be some finite terminal time and $X:[0,T] \cross \Omega \rightarrow \mathbb{R}^d$ denote the McKean-Vlasov process \eqref{eqn:mvsde}. We let $G: \mathbb{R}^d \longrightarrow \mathbb{R}$ be a given Lipschitz rare-event observable. The objective is to build a computationally efficient {MLMC} estimator $\mathcal{A}_{\textrm{MLMC}}$ for $\E{G(X(T))}$ with a given relative tolerance $\tol_\mathrm{r} > 0$ that satisfies
{
\begin{equation}
    \label{eqn:mlmc_objective}
    \prob{\frac{\abs{\mathcal{A}_{\textrm{MLMC}}-\E{G(X(T))}}}{\abs{\E{G(X(T))}}} \geq \tol_\mathrm{r}} \leq \nu,
\end{equation}
for a given confidence level determined by $0 < \nu \ll 1$. 
}
The high dimensionality of the Kolmogorov backward equation corresponding to the stochastic particle system \eqref{eqn:strong_approx_mvsde} makes numerical solutions of $\E{G(X(T))}$ up to some $\tol_{\mathrm{r}}$ computationally infeasible. This motivates employing MC methods to overcome the curse of dimensionality. Combining MC with variance reduction techniques, such as importance sampling, is required to produce feasible rare-event estimates. 

In~\citep{my_paper}, we introduced the DLMC estimator, based on a decoupling approach \citep{is_mvsde} to provide a simple importance sampling scheme implementation minimizing the estimator variance. The current paper extends the DLMC estimator to the multilevel setting, achieving better complexity than $\order{\tol_\mathrm{r}^{-4}}$ from the single-level DLMC estimator. {Section~\ref{sec:dlmc} introduces the decoupling approach for MV-SDEs and associated notation before introducing the DLMC estimator.}

{
\section{Double Loop Monte Carlo Estimator Using a Decoupling Approach}
\label{sec:dlmc}

The decoupling approach was developed for importance sampling in MV-SDEs~\citep{is_mvsde,my_paper}, where the idea is to approximate the MV-SDE law empirically as in \eqref{eqn:emp_dist_law}, use the approximation as input to define a decoupled MV-SDE and apply a change of measure to it. We decouple the computation of the MV-SDE law and the change in probability measure required for importance sampling. First, we introduce the general decoupling approach.

\subsection{Decoupling Approach for McKean--Vlasov Stochastic Differential Equation}
\label{sec:decoupling}

The decoupling approach in~\citep{my_paper,is_mvsde} comprises the following steps.

\begin{enumerate}
	\item We approximate the MV-SDE law $\{\mu_t: t \in [0, T]\}$ using the empirical measure $\{\mu_t^P: t \in [0, T]\}$ in \eqref{eqn:emp_dist_law} using particles $\{X^P_p(t): t \in [0, T]\}_{p=1}^P$ satisfying \eqref{eqn:strong_approx_mvsde}.

    \item Given $\{\mu_t^P: t \in [0,T]\}$, we define the decoupled MV-SDE for the process $\Bar{X}^P:[0,T] \times \Omega \rightarrow \mathbb{R}^d$ as
    \begin{empheq}[left=\empheqlbrace, right = ,]{alignat=2}
        \label{eqn:decoupled_mvsde}
        \begin{split}
            &\dd \Bar{X}^P(t) = b\left(\Bar{X}^P(t), \frac{1}{P} \sum_{j=1}^P  \kappa_1(\Bar{X}^P(t),X^P_j(t)) \right) \dd t \\
            &\qquad + \sigma \left(\Bar{X}^P(t), \frac{1}{P} \sum_{j=1}^P  \kappa_2(\Bar{X}^P(t),X^P_j(t)) \right) \dd \Bar{W}(t), \quad t \in [0,T] \\
            &\Bar{X}^P(0) = \Bar{x}_0 \sim \mu_0, \quad \Bar{x}_0 \in \mathbb{R}^d , 
        \end{split}
    \end{empheq}
    where superscript $P$ indicates that the drift and diffusion functions in \eqref{eqn:decoupled_mvsde} are computed using $\{\mu_t^P: t \in [0,T]\}$ derived from the stochastic $P$-particle system. Drift and diffusion coefficients $b$ and $\sigma$ are the same as defined in Section~\ref{sec:mvsde}. In addition, $\Bar{W}:[0,T]\cross \mathbb{R}^d \rightarrow \mathbb{R}^d$ is a standard $d$-dimensional Wiener process independent of the Wiener processes $\{W_p\}_{p=1}^P$ used in \eqref{eqn:strong_approx_mvsde}, and $\Bar{x}_0 \in \mathbb{R}^d$ is a random initial state sampled from $\mu_0$ as defined in \eqref{eqn:mvsde} and is independent from $\{(x_0)_p\}_{p=1}^P$ in \eqref{eqn:strong_approx_mvsde}. Thus, \eqref{eqn:decoupled_mvsde} is a standard SDE with random coefficients. 
    
	\item We introduce a copy space (see~\citep{is_mvsde}) to distinguish the decoupled MV-SDE \eqref{eqn:decoupled_mvsde} from the stochastic $P$-particle system. We suppose \eqref{eqn:strong_approx_mvsde} is defined on the probability space $(\Omega,\mathcal{F},\mathbb{P})$. We define a copy space $(\Bar{\Omega},\Bar{\mathcal{F}},\Bar{\mathbb{P}})$; hence, we define \eqref{eqn:decoupled_mvsde} on the product space $(\Omega,\mathcal{F},\mathbb{P}) \cross (\Bar{\Omega},\Bar{\mathcal{F}},\Bar{\mathbb{P}})$. Thus, $\mathbb{P}$ is a probability measure generated by the randomness of $\{\mu_t^P: t \in [0, T]\}$, and $\Bar{\mathbb{P}}$ denotes the measure generated by the randomness of the Wiener process driving \eqref{eqn:decoupled_mvsde} conditioned on $\{\mu_t^P:t\in [0, T]\}$.
    
    \item We approximate the quantity of interest as 
    \begin{align}
        \label{eqn:total_exp_mvsde}
        \E{G(X(T))} & \approx \mathbb{E}_{\mathbb{P} \otimes \Bar{\mathbb{P}}} \left[G(\Bar{X}^P(T))\right] \nonumber \\
        &= \mathbb{E}_\mathbb{P} \left[\mathbb{E}_{\Bar{\mathbb{P}}} \left[ G(\Bar{X}^P(T)) \mid \{\mu_t^P: t \in [0,T]\} \right]\right] \cdot
    \end{align}
    Henceforth, we omit the probability measure above to simplify the notation such that $\E{G(\Bar{X}^P(T))}$ means the expectation is taken with respect to all randomness in the decoupled MV-SDE \eqref{eqn:decoupled_mvsde}. We estimate the inner expectation $\E{G(\Bar{X}^P(T))\mid \{\mu_t^P: t \in [0, T]\}}$ for a given $\{\mu_t^P: t \in [0, T]\}$ and then estimate the outer expectation using MC sampling over different realizations of $\{\mu_t^P: t \in [0, T]\}$.
    
\end{enumerate}

The inner expectation $\mathbb{E}\left[G(\Bar{X}^P(T))\mid \{\mu_t^P: t \in [0,T]\}\right]$ can be obtained using the Kolmogorov backward equation for the decoupled MV-SDE \eqref{eqn:decoupled_mvsde}. Obtaining an analytical solution to the Kolmogorov backward equation is not always possible, and conventional numerical methods do not handle relative error tolerances, which are relevant in the rare-event regime. This motivates MC methods coupled with importance sampling, even for the 1D case, to estimate the nested expectation~\eqref{eqn:total_exp_mvsde} in the rare-event regime. We approximate the nested expectation~\eqref{eqn:total_exp_mvsde} using a DLMC estimator, as in~\citep{my_paper}. The general outline of the DLMC estimator is given in Algorithm~\ref{alg:dlmc_outline}. In the following, we use the notation $\omega_{p:P}^{(i)} \eqdef \left( \omega_q^{(i)} \right)_{q=p}^P$, where for each $q$, $\omega_q^{(i)}$ denotes the $i^\text{th}$ sample of the set of underlying random variables that are used in calculating the empirical law $\mu_t^{P}$. Hence, $\mu^{P}_t(\omega_{1:P}^{(i)})$ denotes the $i^{\mathrm{th}}$ realization of the empirical law \eqref{eqn:emp_dist_law}. We let $\bar{\omega}^{(i)}$ denote the $i^{\mathrm{th}}$ realization of random variables driving the decoupled MV-SDE dynamics \eqref{eqn:decoupled_mvsde} conditioned on a realization of the empirical law.  

\begin{algorithm}
    \caption{Outline of the double loop Monte Carlo algorithm for decoupled McKean--Vlasov stochastic differential equation}
\label{alg:dlmc_outline}
    \SetAlgoLined
    \textbf{Inputs: } $P,M_{1},M_2$; \\
    \For{$i=1,\ldots,M_1$}{
    Generate $\left\{\mu^{P}_t(\omega_{1:P}^{(i)}):t \in [0,T]\right\}$ using \eqref{eqn:strong_approx_mvsde},\eqref{eqn:emp_dist_law}; \\
    \For{$j=1,\ldots,M_2$}{
    Given $\left\{\mu^{P}_t(\omega_{1:P}^{(i)}):t \in [0,T]\right\}$, generate sample path of \eqref{eqn:decoupled_mvsde} using $\bar{\omega}^{(j)}$;\\
    Compute $G\left(\Bar{X}^{P}(T)\right) \left(\omega_{1:P}^{(i)}, \bar{\omega}^{(j)}\right)$; \\
    }
    }
    Approximate $\E{G\left(X(T)\right)}$ by $\frac{1}{M_1} \sum_{i=1}^{M_1} \frac{1}{M_2} \sum_{j=1}^{M_2} G\left(\Bar{X}^{P}(T)\right) \left(\omega_{1:P}^{(i)}, \bar{\omega}^{(j)}\right)$ ;
\end{algorithm}

The decoupled MV-SDE~\eqref{eqn:decoupled_mvsde} for the given empirical law $\left\{\mu^{P}_t:t \in [0, T]\right\}$ is a standard SDE, making it possible to use stochastic optimal control to derive an optimal change of measure minimizing the variance of the estimator of the inner expectation $\E{G\left(\Bar{X}^{P}(T)\right) \mid \left\{\mu^{P}_t:t \in [0, T]\right\}}$, as formulated in previous studies~\citep{is_entropy,is_diffusions,is_cross_entropy}. Such an importance sampling scheme for the decoupled MV-SDE was derived in~\citep{my_paper} and is summarized in Section~\ref{sec:is_dlmc}.
}

\section{Importance Sampling for the Decoupled McKean--Vlasov Stochastic Differential Equation}
\label{sec:is_dlmc}

{
This section applies stochastic optimal control theory to obtain the optimal change of measure for the decoupled MV-SDE~\eqref{eqn:decoupled_mvsde}. Then, we incorporate the above importance sampling scheme to the DLMC Algorithm~\ref{alg:dlmc_outline}, and formulate the DLMC estimator with importance sampling.
}
\subsection{Optimal Importance Sampling Control for Decoupled McKean--Vlasov Stochastic Differential Equation}
\label{sec:optimal_is}

\citep{my_paper} derived an optimal change of measure for the decoupled MV-SDE, minimizing the MC estimator variance based on stochastic optimal control. {This was based on the well-known Girsanov theorem for change of measure~\citep{sde_oksendal} for standard SDEs.} We recall the main results here. First, we formulate the Hamilton--Jacobi--Bellman control PDE that provides optimal control for the decoupled MV-SDE. We introduce the following notation: $\langle \cdot,\cdot \rangle$ is the Euclidean dot product between two functions in $\mathbb{R}^d$, $\nabla \cdot$ denotes the gradient vector of a scalar function, $\nabla^2 \cdot$ represents the {Hessian matrix} of a scalar function, $\cdot: \cdot$ indicates the Frobenius inner product between two matrix-valued functions, $\norm{\cdot}$ is the Euclidean norm of a function in $\mathbb{R}^d$ and $C^k(A,B)$ denotes the set of real-valued bounded continuous functions from domain $A$ to set $B$ with $k \geq 1$ bounded continuous derivatives on $B$.  

\begin{prop}[Hamilton--Jacobi--Bellman PDE for decoupled MV-SDE~\citep{my_paper}]
    \label{th:hjb_decoupled_mvsde}
    \hspace{1mm} Let the process $\Bar{X}^P$ follow the dynamics \eqref{eqn:decoupled_mvsde}. We consider the following Itô SDE for the controlled process $\Bar{X}^P_\zeta: [0,T] \cross \Omega \rightarrow \mathbb{R}^d$ with control $\zeta: [0,T] \cross \mathbb{R}^d \rightarrow \mathbb{R}^d$:
   	\begin{empheq}[left=\empheqlbrace, right = \cdot]{alignat=2}
        \label{eqn:dmvsde_sde_is}
        \begin{split}
            \dd \Bar{X}^P_\zeta(t) &= \Bigg( \Bigg. b\left(\Bar{X}^P_\zeta(t), \frac{1}{P} \sum_{j=1}^P  \kappa_1(\Bar{X}^P_\zeta(t),X^P_j(t)) \right) \\
            &+ \sigma \left(\Bar{X}^P_\zeta(t), \frac{1}{P} \sum_{j=1}^P  \kappa_2(\Bar{X}^P_\zeta(t),X^P_j(t)) \right) \zeta(t,\Bar{X}^P_\zeta(t)) \Bigg. \Bigg) \dd t \\
            &+ \sigma \left(\Bar{X}^P_\zeta(t), \frac{1}{P} \sum_{j=1}^P  \kappa_2(\Bar{X}^P_\zeta(t),X^P_j(t)) \right) \dd W(t), \quad 0<t<T \\ 
            \Bar{X}^P_\zeta(0) &= \Bar{X}^P(0) = \Bar{x}_0 \sim \mu_0 . 
        \end{split}
    \end{empheq}
    where \eqref{eqn:strong_approx_mvsde} is used to compute $\{\mu^P_t:t \in [0,T]\}$ in \eqref{eqn:decoupled_mvsde} and \eqref{eqn:dmvsde_sde_is}. The value function $u:[0,T] \cross \mathbb{R}^d \rightarrow \mathbb{R}^d$ minimizing the second moment (for the derivation, see~\citep{my_paper}) {of the DLMC estimator with importance sampling} is written as 
    \begin{align}
        \label{eqn:dmvsde_value_fxn}
        u(t,x) &= \min_{\zeta \in \mathcal{Z}}      \mathbb{E}\Bigg[\Bigg.G^2(\Bar{X}_\zeta^P(T)) \exp{-\int_t^T \norm{\zeta(s,\Bar{X}_\zeta^P(s))}^2 - 2 \int_t^T \langle \zeta(s,\Bar{X}_\zeta^P(s)),\dd W(s) \rangle} \nonumber\\
        &\qquad \Big| \quad \Bar{X}_\zeta^P(t) = x, \{\mu_t^P: t \in [0,T]\}\Bigg.\Bigg] \cdot
    \end{align} 
    {Here $\mathcal{Z} = \left\{ f : f \in C^1 \left( [0,T] \cross \mathbb{R}^d , \mathbb{R}^d \right) \right\}$ is the set of admissible deterministic $d$-dimensional Markov controls. Assume $b$ and $\sigma$ are sufficiently regular such that $u$ has bounded and continuous derivatives up to first order in time and second order in space and $u(t,x) \neq 0 \quad \forall (t,x) \in [0,T] \cross \mathbb{R}^d$.} We define $\gamma:[0,T] \cross \mathbb{R}^d \rightarrow \mathbb{R}^d$, such that $u(t,x) = \exp{-2 \gamma(t,x)}$. Then, $\gamma$ satisfies the nonlinear Hamilton--Jacobi--Bellman equation:
    \begin{empheq}[left=\empheqlbrace, right = ,]{alignat=2}
        \label{eqn:dmvsde_hjb_form2}
        \begin{split}
            &\frac{\partial \gamma}{\partial t} + \langle b\left(x, \frac{1}{P} \sum_{j=1}^P  \kappa_1(x,X^P_j(t)) \right), \nabla \gamma \rangle + \frac{1}{2} \nabla^2 \gamma : \left(\sigma \sigma^T\right) \left(x, \frac{1}{P} \sum_{j=1}^P  \kappa_2(x,X^P_j(t)) \right)\\
            & - \frac{1}{4} \norm{\sigma^T \nabla \gamma \left(x, \frac{1}{P} \sum_{j=1}^P  \kappa_2(x,X^P_j(t)) \right)}^2 = 0, \quad (t,x) \in [0,T) \cross \mathbb{R}^d \\
            &\gamma(T,x) = - \log \abs{G(x)}, \quad x \in \mathbb{R}^d , 
        \end{split}
    \end{empheq}
    with optimal control 
    \begin{equation}
        \zeta^*(t,x) = - \sigma^T \left(x, \frac{1}{P} \sum_{j=1}^P  \kappa_2(x,X^P_j(t)) \right) \nabla \gamma \left(t,x \right),
    \end{equation}
    minimizing the second moment.
\end{prop}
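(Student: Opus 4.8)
The plan is to treat \eqref{eqn:dmvsde_value_fxn} as the value function of a stochastic optimal control problem and to prove the statement by a verification argument carried out $\Bar{\mathbb{P}}$-conditionally on a fixed realization of the empirical law $\{\mu^P_t : t\in[0,T]\}$, so that \eqref{eqn:decoupled_mvsde} and \eqref{eqn:dmvsde_sde_is} are ordinary It\^o SDEs with (almost surely) well-behaved random coefficients and the classical theory applies. The first step is to record that the importance sampling estimator is unbiased: Girsanov's theorem for the drift change $b\mapsto b+\sigma\zeta$ in \eqref{eqn:dmvsde_sde_is} gives that the Radon--Nikodym derivative of the law of $\Bar{X}^P$ with respect to that of $\Bar{X}^P_\zeta$ equals $\mathcal{M}_\zeta(T)^{-1}=\exp(-\int_0^T\langle\zeta,\dd W\rangle-\frac12\int_0^T\norm{\zeta}^2\dd s)$, whence $\E{G(\Bar{X}^P(T))\mid\{\mu^P_t\}}=\E{G(\Bar{X}^P_\zeta(T))\mathcal{M}_\zeta(T)^{-1}\mid\{\mu^P_t\}}$ for every admissible $\zeta\in\mathcal{Z}$. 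Consequently the conditional variance of the estimator is governed by its second moment $\E{G^2(\Bar{X}^P_\zeta(T))\mathcal{M}_\zeta(T)^{-2}\mid\{\mu^P_t\}}$, which is exactly the functional minimized in \eqref{eqn:dmvsde_value_fxn}; this identifies $u$ as the value function of a control problem with a multiplicative exponential-of-integral cost.

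Next I would derive the dynamic programming (HJB) equation for $u$. For $s\in[t,T]$ set $Y_\zeta(s)=\exp(-\int_t^s\norm{\zeta(r,\Bar{X}^P_\zeta(r))}^2\dd r-2\int_t^s\langle\zeta(r,\Bar{X}^P_\zeta(r)),\dd W(r)\rangle)$ and apply It\^o's formula to the product $u(s,\Bar{X}^P_\zeta(s))\,Y_\zeta(s)$. Using $\dd Y_\zeta=Y_\zeta(\norm{\zeta}^2\dd s-2\langle\zeta,\dd W\rangle)$ together with the cross-variation between $u(s,\Bar{X}^P_\zeta(s))$ and $Y_\zeta(s)$, the $\dd W$-terms form a local martingale and the $\dd s$-drift collapses to $Y_\zeta(s)\,\big(\mathcal{L}u+u\norm{\zeta}^2-\langle\zeta,\sigma^T\nabla u\rangle\big)$, where $\mathcal{L}u=\partial_t u+\langle b,\nabla u\rangle+\frac12(\sigma\sigma^T):\nabla^2 u$ is the parabolic operator of \eqref{eqn:decoupled_mvsde}. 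The dynamic programming principle forces this drift to be nonnegative for all admissible $\zeta$ with equality at the optimum, i.e.\ $\mathcal{L}u+\min_{\zeta\in\mathbb{R}^d}\big(u\norm{\zeta}^2-\langle\zeta,\sigma^T\nabla u\rangle\big)=0$ with terminal data $u(T,\cdot)=G^2$. The inner minimization is a strictly convex quadratic in $\zeta$ (here one uses that $u$ is a second moment, hence $u\ge0$, and is nonvanishing by hypothesis, so $u>0$); it is attained at $\zeta^*=\sigma^T\nabla u/(2u)$ with minimal value $-\norm{\sigma^T\nabla u}^2/(4u)$, yielding the nonlinear PDE $\mathcal{L}u-\norm{\sigma^T\nabla u}^2/(4u)=0$.

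The final step is the logarithmic (Hopf--Cole-type) substitution $u=\exp(-2\gamma)$, which is well defined precisely because $u>0$. Inserting $\partial_t u$, $\nabla u$, $\nabla^2 u$ expressed through $\gamma$ and cancelling the common nonzero factor $\exp(-2\gamma)$ converts the PDE for $u$ into the Hamilton--Jacobi--Bellman equation \eqref{eqn:dmvsde_hjb_form2} for $\gamma$; the terminal condition becomes $\gamma(T,x)=-\frac12\log G^2(x)=-\log\abs{G(x)}$, and the optimal feedback control becomes $\zeta^*(t,x)=\sigma^T\nabla u/(2u)=-\sigma^T\big(x,\frac1P\sum_{j=1}^P\kappa_2(x,X^P_j(t))\big)\nabla\gamma(t,x)$. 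To close the verification one must check that, under the stated regularity ($u\in C^{1,2}$, bounded with bounded derivatives, $b,\sigma$ sufficiently regular, $G$ Lipschitz), the local martingale produced above is a true martingale for every admissible $\zeta$ — e.g.\ via a Novikov/linear-growth bound giving $\E{\mathcal{M}_\zeta(T)^{-2}\mid\{\mu^P_t\}}<\infty$ and uniform integrability — so that taking conditional expectations and using $u(T,\cdot)=G^2$ gives $\E{G^2(\Bar{X}^P_\zeta(T))Y_\zeta(T)\mid\{\mu^P_t\}}\ge u(t,\Bar{X}^P_\zeta(t))$, with equality for $\zeta=\zeta^*$.

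I expect the main obstacle to be exactly this last verification step: showing that the candidate $u$ defined by \eqref{eqn:dmvsde_value_fxn} is genuinely $C^{1,2}$ and strictly positive, and that the relevant exponential local martingales are true martingales, is delicate for unbounded or merely Lipschitz rare-event observables $G$, where integrability of $G^2(\Bar{X}^P_\zeta(T))\mathcal{M}_\zeta(T)^{-2}$ and non-degeneracy of $u$ are not automatic. Following~\citep{my_paper}, I would resolve this by \emph{postulating} the stated regularity and non-vanishing hypotheses on $b,\sigma,G,u$ (so the result is a verification theorem rather than an existence/uniqueness statement for \eqref{eqn:dmvsde_hjb_form2}), and by exploiting that the entire argument is conditional on $\{\mu^P_t\}$, for which \eqref{eqn:decoupled_mvsde} is a standard SDE with almost surely regular random coefficients — the only MV-SDE-specific feature being the empirical-measure dependence frozen inside $b$ and $\sigma$.
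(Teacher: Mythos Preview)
The paper does not actually prove this proposition: its entire proof reads ``See Appendix B in~\citep{my_paper}.'' Your proposal supplies precisely the standard verification argument (Girsanov for unbiasedness, It\^o's formula on $u(s,\Bar{X}^P_\zeta(s))Y_\zeta(s)$ to extract the HJB drift, pointwise quadratic minimization in $\zeta$, then the Hopf--Cole substitution $u=\exp(-2\gamma)$), which is exactly the route one expects the cited appendix to take; so your approach is correct and aligned with the intended proof.

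One minor point worth double-checking in your write-up: when you carry out the substitution $u=\exp(-2\gamma)$ in $\mathcal{L}u-\norm{\sigma^T\nabla u}^2/(4u)=0$, be careful with the coefficient in front of the quadratic term $\norm{\sigma^T\nabla\gamma}^2$ --- the two contributions (from $\nabla^2 u$ via $\nabla\gamma\otimes\nabla\gamma$ and from the nonlinear term) must be combined consistently to match the stated \eqref{eqn:dmvsde_hjb_form2}. Your identification of the main obstacle (regularity and strict positivity of $u$, true-martingale property of the exponential local martingales) is spot on, and your resolution --- treating the result as a verification theorem under the postulated $C^{1,2}$ and non-vanishing hypotheses, working conditionally on $\{\mu^P_t\}$ so that the decoupled equation is a standard SDE --- is exactly how the proposition is framed.
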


\begin{proof}
	See Appendix B in~\citep{my_paper}.
\end{proof}

{Previous studies~\citep{my_paper,zero_variance_is} have demonstrated that \eqref{eqn:dmvsde_hjb_form2} leads to a zero-variance estimator of the inner expectation $\E{G\left(\Bar{X}^{P}(T)\right) \mid \left\{\mu^{P}_t:t \in [0,T]\right\}}$, provided $G(\cdot)$ does not change sign. Using the transformation $u(t,x) = v^2(t,x)$, we can recover the linear Kolmogorov backward equation for the dynamics \eqref{eqn:decoupled_mvsde} for a given $\left\{\mu^{P}_t:t \in [0,T]\right\}$.}

\begin{empheq}[left=\empheqlbrace, right = ,]{alignat=2}
    \label{eqn:dmvsde_hjb_form3}
    \begin{split}
        &\frac{\partial v}{\partial t} + \langle b\left(x, \frac{1}{P} \sum_{j=1}^P  \kappa_1(x,X^P_j(t)) \right), \nabla v \rangle \\
        & \qquad + \frac{1}{2} \nabla^2 v : \left(\sigma \sigma^T\right) \left(x, \frac{1}{P} \sum_{j=1}^P  \kappa_2(x,X^P_j(t)) \right) = 0 ,\quad (t,x) \in [0,T) \cross \mathbb{R}^d \\
        &v(T,x) = \abs{G(x)}, \quad x \in \mathbb{R}^d , 
    \end{split}
\end{empheq}
with optimal control 
\begin{equation}
    \label{eqn:dmvsde_hjb_optimal_control}
    \zeta^*(t,x) = \sigma^T \left(x, \frac{1}{P} \sum_{j=1}^P  \kappa_2(x,X^P_j(t)) \right) \nabla \log v (t,x) \cdot
\end{equation}

{
\begin{rem}
	\hspace{1mm} In Proposition~\ref{th:hjb_decoupled_mvsde}, we control the decoupled McKean--Vlasov process $\bar{X}^P$ (whose dynamics are given in~\eqref{eqn:decoupled_mvsde}) instead of the particles $X^P_p$ from the interacting particle system~\eqref{eqn:strong_approx_mvsde} because the optimal control problem would be $d$-dimensional instead of $P d$-dimensional.
\end{rem}
}
We require a realization of the empirical law from the stochastic $P$-particle system \eqref{eqn:strong_approx_mvsde} to obtain the control using \eqref{eqn:dmvsde_hjb_form3} and \eqref{eqn:dmvsde_hjb_optimal_control}. In practice, we obtain a time-discretized version of the empirical law using the Euler--Maruyama scheme. {To avoid computing the optimal control for each $\{\mu_t^P: t \in [0, T]\}$ realization in the DLMC estimator, we independently obtain a sufficiently accurate empirical law realization off-line using a sufficiently large number of particles and time steps (see~\citep{my_paper}, Algorithm~2). This approach is motivated by the convergence of the empirical law to the MV-SDE law $\{\mu_t: t \in [0, T]\}$ as the number of particles and time steps tend to infinity. This approach has two main advantages: we do not need to solve the original KBE \eqref{eqn:dmvsde_hjb_form3} for the value function $u$ to satisfy relative tolerance $\tol_\mathrm{r}$, and we don't require high accuracy for the solution to $\zeta^*$ for the purpose of importance sampling.} Section~\ref{sec:results} confirms that the deterministic control thus obtained is sufficient to ensure variance reduction in the proposed estimator.
{
\begin{rem}
\label{rem:hjb_dimension}
    \hspace{1mm} As a proof of concept, we numerically solve the 1D ($d=1$) control PDE arising from the Kuramoto model \eqref{eqn:kuramoto_model} using finite differences and extend the solution to the entire domain using linear interpolation. Using such a method to solve \eqref{eqn:dmvsde_hjb_form3} in higher dimensions ($d \gg 1$) is computationally expensive due to the curse of dimensionality. In such cases, model reduction techniques~\citep{is_model_reduction,is_projection} or solving the minimization problem \eqref{eqn:dmvsde_value_fxn} using stochastic gradient methods~\citep{is_entropy} may help. We do not focus on building efficient methods to solve the control PDE in this work.
\end{rem}
}
\subsection{Double Loop Monte Carlo Estimator With Importance Sampling}
\label{sec:dlmc_single}

We briefly outline the single-level DLMC estimator for a given importance sampling control $\zeta$ obtained off-line by solving \eqref{eqn:dmvsde_hjb_optimal_control} (for more details, see \citep{my_paper}).
{
\begin{enumerate}
    \item 
   		We consider the discretization $0=t_0<t_1<t_2<\ldots<t_{N} = T$ of the time domain $[0, T]$ with $N$ equal time steps of the particle system \eqref{eqn:strong_approx_mvsde} (i.e., $t_n = n \Delta t, \quad n=0,1,\ldots,N$ and $\Delta t = T/{N}$). The discretized version of particle $X_p^P$ corresponding to \eqref{eqn:strong_approx_mvsde} with $P$ particles is denoted by $X_p^{P|N}$. Let $\omega_{1:P}$ denote the $P$ underlying sets of random variables used to generate a realization of $X_p^{P|N}$ from \eqref{eqn:strong_approx_mvsde}.
     \item We define the discrete law obtained from the time-discretized particle system by $\mu^{P|N}$ as 
    \begin{equation}
        \label{eqn:dmvsde_discrete_law}
        \mu^{P|N}(t_n) = \frac{1}{P} \sum_{p=1}^P \delta_{X_p^{P|N}(t_n)}, \quad \forall n=0,\ldots,N \cdot
    \end{equation}
    Then, we define a time-continuous extension of the empirical law by extending the time-discrete stochastic particle system to all $t \in [0, T]$ using the continuous-time forward Euler-Maruyama extension.
    \item Given the approximate law $\mu^{P|N}$ from \eqref{eqn:dmvsde_discrete_law} and control $\zeta:[0,T] \times \mathbb{R}^d \rightarrow \mathbb{R}^d$, we generate sample paths $\{\Bar{X}^{P|N}_\zeta(t):t \in [0,T]\}$ of the controlled decoupled MV-SDE \eqref{eqn:dmvsde_sde_is}. Let $\bar{\omega}$ denote the set of random variables to generate one of the above sample paths.
    \item We consider the same discretization of the time domain $[0, T]$ as the particle system~\eqref{eqn:strong_approx_mvsde} for the controlled decoupled MV-SDE~\eqref{eqn:dmvsde_sde_is} with $N$ equal time steps. We define $\{\Bar{X}^{P|N}_\zeta(t_n)\}_{n=1}^{N}$ as the Euler--Maruyama time-discretized version of $\Bar{X}^{P|N}_\zeta$, the decoupled MV-SDE process \eqref{eqn:dmvsde_sde_is} defined using empirical law $\mu^{P|N}$ \eqref{eqn:dmvsde_discrete_law}.
    \item Thus, we can express the quantity of interest with importance sampling as follows:
    \begin{equation}
        \label{eqn:dmvsde_qoi_is}
        \E{G(\Bar{X}^{P|N}(T))} = \E{G(\Bar{X}_\zeta^{P|N}(T))\mathbb{L}^{P|N}}.
    \end{equation}
    This expectation is approximated using the DLMC estimator $\mathcal{A}_{\textrm{MC}}$ from~\citep{my_paper}.
    \begin{equation}
        \label{eqn:dmvsde_dlmc_est}
        \mathcal{A}_{\textrm{MC}} = \frac{1}{M_1} \sum_{i=1}^{M_1} \frac{1}{M_2} \sum_{j=1}^{M_2} G \left(\Bar{X}_\zeta^{P|N}(T) \right) \mathbb{L}^{P|N} \left( \omega_{1:P}^{(i)}, \bar{\omega}^{(j)} \right) \cdot
    \end{equation}
    where the likelihood factor (for detailed derivation, see~\citep{my_paper}) is 
    \begin{align}
        \label{eqn:dlmc_llhood_factor}
        \mathbb{L}^{P|N}\left( \omega_{1:P}^{(i)}, \bar{\omega}^{(j)} \right) &= \prod_{n=0}^{N-1} \exp \Bigg\{ \Bigg. -\frac{1}{2} \Delta t \norm{\zeta(t_n,\Bar{X}_\zeta^{P|N}(t_n))}^2 \\
        &\quad - \langle \Delta W(t_n), \zeta(t_n,\Bar{X}^{P|N}_\zeta(t_n)) \rangle \Bigg. \Bigg\} \left( \omega_{1:P}^{(i)}, \bar{\omega}^{(j)} \right), \nonumber
    \end{align}
    Here, $M_1$ is the number of realizations of $\mu^{P|N}$ in the DLMC estimator, and $\omega_{1:P}^{(i)}$ denotes the $i^{\mathrm{th}}$ realization of $\omega_{1:P}$. For each realization of $\mu^{P|N}$, $M_2$ is the number of sample paths for the decoupled MV-SDE, and $\bar{\omega}^{(j)}$ denotes the $j^{\mathrm{th}}$ realization of $\bar{\omega}$. Further $\Delta W(t_n) \sim \mathcal{N}(0,\sqrt{\Delta t}\mathbb{I}_d)$ are the Wiener increments driving the dynamics of the time-discretized decoupled MV-SDE~\eqref{eqn:dmvsde_sde_is}.    
\end{enumerate}

\begin{rem}
    \hspace{1mm} The MC estimator for smooth, nonrare observables based on the particle system approximation introduced by~\citep{mlmc_mvsde} has a {computational complexity} of $\order{\tol^{-4}}$ for a given absolute error tolerance $\tol$. However, the constant associated with this complexity substantially increases in the rare-event regime~\citep{is_general_ref}. This problem can be overcome using the above importance sampling scheme. \citep{my_paper} demonstrates that the DLMC estimator \eqref{eqn:dmvsde_dlmc_est} with importance sampling has a complexity of $\order{\tol_{\mathrm{r}}^{-4}}$ for estimating rare-event probabilities up to the prescribed relative error tolerance $\tol_{\mathrm{r}}$. Additionally, importance sampling ensures that the constant associated with the complexity of this DLMC estimator \eqref{eqn:dmvsde_dlmc_est} is significantly reduced, enabling a feasible computation of rare-event probabilities.
\end{rem}
}
Section \ref{sec:mldlmc} extends this estimator to the multilevel setting to obtain better complexity.

\section{Multilevel Double Loop Monte Carlo Estimator With Importance Sampling}
\label{sec:mldlmc}

\subsection{Multilevel Double Loop Monte Carlo Estimator for Decoupled McKean--Vlasov Stochastic Differential Equation}
\label{sec:mldlmc_est}

{Two discretization parameters ($P$ and $N$) approximate the solution to the decoupled MV-SDE. For {MLMC} purposes, we introduce the parameter (level) $\ell$ that couples both discretization parameters. The geometric sequence of levels is defined given the parameter $\tau$.} For $\ell=0,1,\ldots,L$, let:

\begin{align}
    P_\ell &= P_0 \tau^\ell, \nonumber\\
    \label{eqn:mldlmc_hierarchy}
    N_\ell &= N_0 \tau^{\ell} \cdot
\end{align}

We set $G = G(X(T))$, and its corresponding discretization at level $\ell$ is {$G_\ell = G(\Bar{X}^{P_\ell|N_\ell}(T))$, where $X:[0,T] \cross \Omega \rightarrow \mathbb{R}^d$ is the MV-SDE process \eqref{eqn:mvsde} and $\Bar{X}^{P_\ell|N_\ell}$ satisfies \eqref{eqn:decoupled_mvsde}.}
The {MLMC} concept~\citep{giles_mlmc} uses a telescoping sum for level $L\in \mathbb{N}$:

\begin{equation}
    \label{eqn:mldlmc_telescop}
    \mathbb{E}[G_L] = \sum_{\ell=0}^L \mathbb{E}[G_\ell - G_{\ell-1}] \quad, G_{-1}=0 \cdot
\end{equation} 

The current work approximates each expectation in \eqref{eqn:mldlmc_telescop} independently using the DLMC method, creating the {multilevel DLMC} estimator:
\begin{equation}
    \label{eqn:mldlmc_estimator}
    \mathcal{A}_{\textrm{MLMC}} = \sum_{\ell=0}^L \frac{1}{M_{1,\ell}} \sum_{i=1}^{M_{1,\ell}} \frac{1}{M_{2,\ell}} \sum_{j=1}^{M_{2,\ell}} (G_\ell - \mathcal{G}_{\ell-1})(\omega_{1:P_\ell}^{(\ell,i)},\bar{\omega}^{(\ell,j)}) \cdot
\end{equation}
{
where $\mathcal{G}_{\ell-1}$ is a random variable correlated to $G_\ell$ such that $\mathcal{G}_{-1} = 0$ and $\E{\mathcal{G}_{\ell-1}} = \E{G_{\ell-1}}$, ensuring $\E{\mathcal{A}_{\textrm{MLMC}}} = \E{G_L}$. In addition, $\omega^{(\ell,i)}_{1:P_\ell}$ refers to the $i^{\mathrm{th}}$ realization of the $P_\ell$ underlying sets of random variables used to estimate $\mu^{P_\ell|N_\ell}$ at level $\ell$, and $\bar{\omega}^{(\ell,j)}$ denotes the $j^{\mathrm{th}}$ realization of the random variables in~\eqref{eqn:decoupled_mvsde} for the given $\mu^{P_\ell|N_\ell}$ realization at level $\ell$.} Samples of $G_\ell(\omega_{1:P_\ell}^{(\ell,i)},\bar{\omega}^{(\ell,j)})$ and $\mathcal{G}_{\ell-1}(\omega_{1:P_\ell}^{(\ell,i)},\bar{\omega}^{(\ell,j)})$ must be sufficiently correlated to ensure that the {multilevel DLMC} estimator has better complexity than the single-level DLMC estimator. We explore two correlated samplers motivated by the {MLMC} estimators in~\citep{mlmc_mvsde}.

\begin{enumerate}
	 \item \emph{Naïve Sampler}. We use the first $P_{\ell-1}$ random variables out of each set of $P_\ell$ random variables to obtain empirical law $\mu_{\ell-1} = \mu^{P_{\ell-1} | N_{\ell-1}}$ to generate a sample of $\mathcal{G}_{\ell-1}$ using \eqref{eqn:strong_approx_mvsde}. Given $\mu_{\ell-1}$, we solve \eqref{eqn:decoupled_mvsde} at level $\ell-1$ using the same $\bar{\omega}$ as for level $\ell$ and compute the quantity of interest: 
    \begin{equation}
        \label{eqn:naive_sampler}
        \mathcal{G}_{\ell-1}(\omega_{1:P_\ell}^{(\ell,i)},\bar{\omega}^{(\ell,j)}) = \bar{\mathcal{G}}_{\ell-1}(\omega_{1:P_\ell}^{(\ell,i)},\bar{\omega}^{(\ell,j)}) \quad {\eqdef} \quad G_{\ell-1}(\omega_{1:P_{\ell-1}}^{(\ell,i)},\bar{\omega}^{(\ell,j)}) \cdot
    \end{equation}
    \item \emph{Antithetic Sampler}. We split the $P_\ell$ sets of random variables into $\tau$ i.i.d. groups of size $P_{\ell-1}$ each. Then, to generate a sample of $\mathcal{G}_{\ell-1}$, we use each group to independently simulate \eqref{eqn:strong_approx_mvsde} and obtain empirical law $\mu^{(a)}_{\ell-1}$ for $a=1,\ldots,\tau$. Given $\mu^{(a)}_{\ell-1}$, we solve \eqref{eqn:decoupled_mvsde} at level $\ell-1$ independently for each $a$ using the same $\bar{\omega}$ as for level $\ell$. The quantity of interest is computed for each group and averaged over the $\tau$ groups: 
    \begin{equation}
        \label{eqn:antithetic_sampler}
        \mathcal{G}_{\ell-1}(\omega_{1:P_\ell}^{(\ell,i)},\bar{\omega}^{(\ell,j)}) = \hat{\mathcal{G}}_{\ell-1}(\omega_{1:P_\ell}^{(\ell,i)},\bar{\omega}^{(\ell,j)}) \quad {\eqdef} \quad \frac{1}{\tau} \sum_{a=1}^{\tau} G_{\ell-1}(\omega_{(a-1)P_{\ell-1}+1:aP_{\ell-1}}^{(\ell,i)},\bar{\omega}^{(\ell,j)}) \cdot
    \end{equation}
\end{enumerate}

Section~\ref{sec:results} numerically investigates the effects of these two correlation schemes on the variance convergence for level differences. 

\subsubsection{Error analysis}

We aim to build an efficient {multilevel DLMC} estimator that satisfies \eqref{eqn:mlmc_objective}. We can bound the global relative error of $\mathcal{A}_{\textrm{MLMC}}$ as follows:
\begin{equation}
    \label{eqn:mldlmc_global_err}
    \frac{\abs{\E{G}-\mathcal{A}_{\textrm{MLMC}}}}{\abs{\E{G}}} \leq \underbrace{\frac{\abs{\E{G} - \E{G_L}}}{\abs{\E{G}}}}_{=\epsilon_b \text{, Relative bias}} + \underbrace{\frac{\abs{\E{G_L} - \mathcal{A}_{\textrm{MLMC}}}}{\abs{\E{G}}}}_{=\epsilon_s} ,
\end{equation}

We impose more restrictive error constraints than \eqref{eqn:mlmc_objective},
\begin{align}
    \label{eqn:mldlmc_bias_constraint}
    &\epsilon_b \leq \theta \tol_\mathrm{r} \abs{\E{G}}, \\
    \label{eqn:mldlmc_stat_constraint}
    &{\prob{\epsilon_s \geq (1-\theta)\tol_\mathrm{r} \abs{\E{G}}} \leq \nu} ,
\end{align}
for a given tolerance splitting parameter $\theta \in (0,1)$. We make the following assumption for the bias.
\begin{assump}[{Multilevel DLMC} Bias]
    \label{ass:mldlmc_bias}
    \hspace{2mm} There exists a constant $\tilde{\alpha}>0$ such that
    \begin{equation*}
        \epsilon_b = \frac{\abs{\E{G} - \E{G_\ell}}}{\abs{\E{G}}} \lesssim  \tau^{-\tilde{\alpha} \ell} ,
    \end{equation*}
\end{assump}
where $\lesssim$ indicates that a constant $C$ exists independent of $\ell$, such that $\epsilon_b \leq C \tau^{-\tilde{\alpha} \ell}$, and constant $\Tilde{\alpha}$ denotes the bias convergence rate with respect to level $\ell$. Section~\ref{sec:results} verifies this assumption and determines $\Tilde{\alpha}$ numerically for the Kuramoto model. {Assumption~\ref{ass:mldlmc_bias} is motivated by the weak convergence with respect to the number of particles~\citep{kolokoltsov2019mean} and with respect to the number of time steps~\citep{sde_numerics}. In order to use these bounds in this work, we assume the drift/diffusion coefficients satisfy further regularity and boundedness conditions entailed in~\citep{kolokoltsov2019mean,sde_numerics}.} We set the level $L$ to satisfy the bias constraint \eqref{eqn:mldlmc_bias_constraint}. {The statistical error constraint~\eqref{eqn:mldlmc_stat_constraint} can be rewritten as follows:

\begin{equation}
	\label{eqn:stat_normality}
	\prob{\epsilon_s \geq (1-\theta)\tol_\mathrm{r} \abs{\E{G}}} = \prob{\frac{\epsilon_s}{\sqrt{\Var{\mathcal{A}_{\textrm{MLMC}}}}} \geq \frac{(1-\theta) \tol_\mathrm{r} \abs{\E{G}}}{\sqrt{\Var{\mathcal{A}_{\textrm{MLMC}}}}}} \leq \nu \cdot
\end{equation}

By assuming the normality (at least asymptotically) of the estimator $\mathcal{A}_{\textrm{MLMC}}$, we obtain the following condition for the estimator variance:

\begin{equation}
    \label{eqn:mldlmc_var_constraint}
    \Var{\mathcal{A}_{\textrm{MLMC}}} \leq \left(\frac{(1-\theta)\tol_\mathrm{r} \abs{\E{G}}}{C_\nu}\right)^2,
\end{equation}
where $C_\nu$ is the $\left(1-\frac{\nu}{2}\right)$-quantile for the standard normal distribution. The asymptotic normality of {MLMC} estimators can be demonstrated using the Lindeberg--Feller central limit theorem~\citep{lf_clt_theorem}.} The proposed estimator variance can be expressed as 
\begin{equation}
    \label{eqn:mldlmc_est_variance}
    \Var{\mathcal{A}_{\textrm{MLMC}}} = \sum_{\ell=0}^L \frac{1}{M_{1,\ell}} \Var{\frac{1}{M_{2,\ell}} \sum_{j=1}^{M_{2,\ell}} \left(\Delta G_\ell\right)^{(1,j)}},
\end{equation}
where $(\Delta G_\ell)^{(i,j)} = (G_\ell - \mathcal{G}_{\ell-1})(\omega_{1:P_\ell}^{(\ell,i)},\bar{\omega}^{(\ell,j)})$. Using the law of total variance,
\begin{align}
    \text{Var}[\mathcal{A}_{\textrm{MLMC}}] &= \sum_{\ell=0}^L \frac{1}{M_{1,\ell}} \left(\underbrace{\text{Var}[\mathbb{E}[\Delta G_\ell \mid  \{\mu_\ell,\mu_{\ell-1}\}]]}_{=V_{1,\ell}} 
    + \frac{1}{M_{2,\ell}} \underbrace{\mathbb{E}[\text{Var}[\Delta G_\ell \mid \{\mu_\ell,\mu_{\ell-1}\}]]}_{V_{2,\ell}}\right) \nonumber\\
    \label{eqn:mldlmc_total_variance}
    &= \sum_{\ell=0}^L \left(\frac{V_{1,\ell}}{M_{1,\ell}} + \frac{V_{2,\ell}}{M_{1,\ell}M_{2,\ell}}\right) ,
\end{align}
where laws $\{\mu_\ell,\mu_{\ell-1}\}$ are coupled by the same sets of random variables $\omega_{1:P_\ell}^{(\ell,\cdot)}$ as described in the naïve \eqref{eqn:naive_sampler} and antithetic \eqref{eqn:antithetic_sampler} samplers. We make the following assumptions on $V_{1,\ell}$ and $V_{2,\ell}$.

\begin{assump}[{Multilevel DLMC} variance]
    \label{ass:mldlmc_var}
    \hspace{2mm} There exist constants $\tilde{w}>0$ and $\tilde{s}>0$ such that,
    \begin{align}
        V_{1,\ell} &= \text{Var}[\mathbb{E}[\Delta G_\ell \mid \{\mu_\ell,\mu_{\ell-1}\}]] \lesssim \tau^{-\Tilde{w} \ell} ,\\
        V_{2,\ell} &= \mathbb{E}[\text{Var}[\Delta G_\ell \mid \{\mu_\ell,\mu_{\ell-1}\}]] \lesssim \tau^{-\Tilde{s} \ell} \cdot
    \end{align}
\end{assump}
where constants $\Tilde{w}$ and $\Tilde{s}$ are the convergence rates for $V_{1,\ell}$ and $V_{2,\ell}$, respectively, with respect to $\ell$. Section~\ref{sec:results} numerically determines these constants for the Kuramoto model.

\subsubsection{Complexity analysis}

We can express the total {computational cost} of the proposed {multilevel DLMC} estimator using the cost of the DLMC estimator \eqref{eqn:dmvsde_dlmc_est},
\begin{equation}
    \label{eqn:mldlmc_work}
    \mathcal{W}[\mathcal{A}_{\textrm{MLMC}}] \lesssim \sum_{\ell=0}^L \left(M_{1,\ell} P_\ell^{1+\gamma_p} N_\ell^{\gamma_n} + M_{1,\ell} M_{2,\ell} P_\ell^{\gamma_p} N_\ell^{\gamma_n}\right) \cdot
\end{equation}
{$\gamma_p > 0$ and $\gamma_n > 0$ are the {computational complexity} rates of the empirical measure computation in the drift and diffusion coefficients and the time discretization scheme, respectively (see Appendix~\ref{app:work}).} Let some level $L$ satisfy the bias constraint \eqref{eqn:mldlmc_bias_constraint}. We aim to compute the optimal parameters $\{M_{1,\ell}$,$M_{2,\ell}\}_{\ell=0}^L$ satisfying \eqref{eqn:mldlmc_var_constraint}, which can be posed as the following optimization problem: 

\begin{empheq}[left=\empheqlbrace, right = \cdot]{equation}
    \label{eqn:mldlmc_optim}
    \begin{alignedat}{2}
        \arg \min_{\{M_{1,\ell},M_{2,\ell}\}_{\ell=0}^L} &  \sum_{\ell=0}^L \left(M_{1,\ell} P_\ell^{1+\gamma_p} N_\ell^{\gamma_n} + M_{1,\ell} M_{2,\ell} P_\ell^{\gamma_p} N_\ell^{\gamma_n}\right)\\ 
        \text{s.t. } & C_{\nu}^2\left(\sum_{\ell=0}^L \left(\frac{V_{1,\ell}}{M_{1,\ell}} + \frac{V_{2,\ell}}{M_{1,\ell}M_{2,\ell}}\right)\right) \approx (1-\theta)^2 \tol^2_\mathrm{r} \abs{\E{G}}^2 
    \end{alignedat}
\end{empheq}

{The arguments in \eqref{eqn:mldlmc_optim} in $\mathbb{R}^+$ that minimize the objective is found using the Lagrangian multiplier method $\forall \ell=0,\ldots,L$.}

\begin{align}
    \mathcal{M}_{1,\ell} &= \frac{C_\nu^2}{(1-\theta)^2 \tol_\mathrm{r}^2 \abs{\E{G}}^2} \frac{\sqrt{V_{1,\ell}}}{\sqrt{P_\ell^{1+\gamma_p} N_\ell^{\gamma_n}}} \left(\sum_{j=0}^L \sqrt{P_j^{\gamma_p} N_j^{\gamma_n}} (\sqrt{V_{1,j}P_j} + \sqrt{V_{2,j}})\right), \nonumber\\
    \label{eqn:mldlmc_optimal_samples}
    \tilde{\mathcal{M}}_{\ell} &= \mathcal{M}_{1,\ell} \mathcal{M}_{2,\ell} = \frac{C_\nu^2}{(1-\theta)^2 \tol_\mathrm{r}^2 \abs{\E{G}}^2} \frac{\sqrt{V_{2,\ell}}}{\sqrt{P_\ell^{\gamma_p} N_\ell^{\gamma_n}}} \left(\sum_{j=0}^L \sqrt{P_j^{\gamma_p} N_j^{\gamma_n}} (\sqrt{V_{1,j}P_j} + \sqrt{V_{2,j}})\right) \cdot
\end{align}

In practice, we only use natural numbers for $\{M_{1,\ell},M_{2,\ell}\}_{\ell=0}^L$. Hence, we use the following quasi-optimal solution to~\eqref{eqn:mldlmc_optim}:

\begin{equation}
	\label{eqn:mldlmc_actual_samples}
	M_{1,\ell} = \lceil \mathcal{M}_{1,\ell} \rceil, \quad M_{2,\ell} = \left\lceil  \frac{\tilde{\mathcal{M}}_{\ell}}{\lceil \mathcal{M}_{1,\ell} \rceil} \right\rceil \cdot
\end{equation}
{Note that $M_{1,\ell}$ required to satisfy $\tol_\mathrm{r}$ scales with the factor $\frac{1}{\abs{\E{G}}^2}$, implying that DLMC without importance sampling can quickly become computationally expensive for rare events.} We obtain the optimal {computational cost} for the proposed {multilevel DLMC} estimator using \eqref{eqn:mldlmc_actual_samples}. 

\begin{thm}[Optimal {multilevel DLMC} complexity]
    \label{th:mldlmc_complexity}
    \hspace{1mm} {Let $G_\ell$ be an approximation for the random variable $G$, for every $\ell \in \mathbb{N}$, and $G_\ell^{(i,j)} \equiv G_\ell(\omega_{1:P_\ell}^{(\ell,i)},\bar{\omega}^{(\ell,j)})$ be a sample of $G_\ell$. Consider the {multilevel DLMC} estimator \eqref{eqn:mldlmc_estimator} with $G_{-1}^{(i,j)} = 0$. Let Assumptions~\ref{ass:mldlmc_bias} and \ref{ass:mldlmc_var} hold. Let the constants $\tau$, $\Tilde{\alpha}$, $\Tilde{w}$, $\Tilde{s}$, $\gamma_p$, and $\gamma_n > 0$ from Assumptions~\ref{ass:mldlmc_bias} and \ref{ass:mldlmc_var} be such that $\tilde{\alpha} \geq \frac{1}{2} \min (\tilde{w}, 1+\tilde{s}, 1 + \gamma_p + \gamma_n)$.}
	Then, for any $\tol_\mathrm{r}<1/e$, there exists an optimal $L$ and sequences $\{M_{1,\ell}\}_{\ell=0}^L$ and $\{M_{2,\ell}\}_{\ell=0}^L$ such that 
    \begin{equation}
        \label{eqn:mldlmc_objective_v2}
        \mathbb{P}\left[\frac{\abs{\mathcal{A}_{\textrm{MLMC}}(L) - \E{G}}}{\abs{\E{G}}} \geq \tol_\mathrm{r}\right] \leq \nu ,
    \end{equation}
    and
    \begin{equation}
        \label{eqn:mldlmc_optimal_work}
        \mathcal{W}[\mathcal{A}_{\textrm{MLMC}}] \lesssim 
            \tol_\mathrm{r}^{-2-\max\left(0,\frac{1+\gamma_n+\gamma_p-\Tilde{w}}{\Tilde{\alpha}},\frac{\gamma_n + \gamma_p - \tilde{s}}{\tilde{\alpha}}\right)}(\log\tol_\mathrm{r}^{-1})^{2 \mathcal{J}},
    \end{equation}
    {
    where
    \begin{equation*}
    	\mathcal{J} = \begin{cases}
    	1, &\quad \text{if } \min(\tilde{w},1+\tilde{s})=1+\gamma_p+\gamma_n, \\
    	0, &\quad \text{else}.
    	\end{cases}
    \end{equation*}
    }
\end{thm}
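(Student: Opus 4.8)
The plan is to run the classical multilevel complexity argument of \citep{giles_mlmc}, adapted to the nested inner/outer structure and the two coupled discretization parameters $P_\ell,N_\ell$. First I would fix the number of levels $L$ from the bias requirement \eqref{eqn:mldlmc_bias_constraint}: by Assumption~\ref{ass:mldlmc_bias} the relative bias at level $\ell$ decays like $\tau^{-\tilde{\alpha}\ell}$, so taking $L=\bigl\lceil(\tilde{\alpha}\log\tau)^{-1}\log(\tol_\mathrm{r}^{-1})+O(1)\bigr\rceil$ enforces $\epsilon_b\le\theta\tol_\mathrm{r}\abs{\E{G}}$ while keeping $\tau^L\asymp\tol_\mathrm{r}^{-1/\tilde{\alpha}}$ and $L\asymp\log\tol_\mathrm{r}^{-1}$; the condition $\tol_\mathrm{r}<1/e$ makes $L\ge 1$ and $\log\tol_\mathrm{r}^{-1}\ge 1$. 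Combined with the variance bound below, the splitting \eqref{eqn:mldlmc_global_err} and the (asymptotic) normality argument leading to \eqref{eqn:mldlmc_var_constraint} (or, more crudely, Chebyshev's inequality with $C_\nu=\nu^{-1/2}$) then yield the confidence bound \eqref{eqn:mldlmc_objective_v2}.

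Next I would verify feasibility. With the quasi-optimal choice \eqref{eqn:mldlmc_actual_samples}, monotonicity of $\lceil\cdot\rceil$ gives $M_{1,\ell}\ge\mathcal{M}_{1,\ell}$ and $M_{1,\ell}M_{2,\ell}\ge\tilde{\mathcal{M}}_\ell$, so $\Var{\mathcal{A}_{\mathrm{MLMC}}}$ in \eqref{eqn:mldlmc_total_variance} is at most its value at the exact Lagrangian minimizer \eqref{eqn:mldlmc_optimal_samples}, which by construction equals $\bigl((1-\theta)\tol_\mathrm{r}\abs{\E{G}}/C_\nu\bigr)^2$; hence \eqref{eqn:mldlmc_var_constraint} holds. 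For the cost, I substitute \eqref{eqn:mldlmc_optimal_samples} into \eqref{eqn:mldlmc_work} and use $\sqrt{P_\ell^{1+\gamma_p}N_\ell^{\gamma_n}}=\sqrt{P_\ell}\,\sqrt{P_\ell^{\gamma_p}N_\ell^{\gamma_n}}$: each summand factors so that the sum reproduces the same bracket, giving a perfect square,
\begin{equation*}
\sum_{\ell=0}^{L}\bigl(\mathcal{M}_{1,\ell}P_\ell^{1+\gamma_p}N_\ell^{\gamma_n}+\tilde{\mathcal{M}}_\ell P_\ell^{\gamma_p}N_\ell^{\gamma_n}\bigr)=\frac{C_\nu^2}{(1-\theta)^2\tol_\mathrm{r}^2\abs{\E{G}}^2}\,S_L^2,
\end{equation*}
where $S_L:=\sum_{j=0}^{L}\sqrt{P_j^{\gamma_p}N_j^{\gamma_n}}\bigl(\sqrt{V_{1,j}P_j}+\sqrt{V_{2,j}}\bigr)$. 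The rounding in \eqref{eqn:mldlmc_actual_samples} changes this by at most a bounded multiplicative factor together with an additive $\sum_\ell P_\ell^{1+\gamma_p}N_\ell^{\gamma_n}\lesssim\tau^{(1+\gamma_p+\gamma_n)L}\asymp\tol_\mathrm{r}^{-(1+\gamma_p+\gamma_n)/\tilde{\alpha}}$, so $\mathcal{W}[\mathcal{A}_{\mathrm{MLMC}}]\lesssim\tol_\mathrm{r}^{-2}S_L^2+\tol_\mathrm{r}^{-(1+\gamma_p+\gamma_n)/\tilde{\alpha}}$.

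Then I would estimate $S_L$ as a geometric sum. With $P_j,N_j\propto\tau^j$ and Assumption~\ref{ass:mldlmc_var}, the $j$-th summand of $S_L$ is $\lesssim\tau^{r_1 j}+\tau^{r_2 j}$ with $r_1=\tfrac{1}{2}(1+\gamma_p+\gamma_n-\tilde{w})$ and $r_2=\tfrac{1}{2}(\gamma_p+\gamma_n-\tilde{s})$, so $S_L\lesssim 1$, $S_L\lesssim L$, or $S_L\lesssim\tau^{\max(r_1,r_2)L}$ according to the sign of $\max(r_1,r_2)$. Using $\tau^L\asymp\tol_\mathrm{r}^{-1/\tilde{\alpha}}$ and $L\asymp\log\tol_\mathrm{r}^{-1}$ this gives $S_L^2\lesssim\tol_\mathrm{r}^{-\max\left(0,\frac{1+\gamma_n+\gamma_p-\tilde{w}}{\tilde{\alpha}},\frac{\gamma_n+\gamma_p-\tilde{s}}{\tilde{\alpha}}\right)}(\log\tol_\mathrm{r}^{-1})^{2\mathcal{J}}$, where a short case check shows that the borderline $\max(r_1,r_2)=0$ — equivalently $\min(\tilde{w},1+\tilde{s})=1+\gamma_p+\gamma_n$ — is exactly the situation producing the extra $L^2$ factor, matching the definition of $\mathcal{J}$. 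Finally, the hypothesis $\tilde{\alpha}\ge\tfrac{1}{2}\min(\tilde{w},1+\tilde{s},1+\gamma_p+\gamma_n)$ is precisely what makes the leftover rounding term negligible: since $1+\gamma_p+\gamma_n-\max\bigl(0,1+\gamma_p+\gamma_n-\tilde{w},\gamma_p+\gamma_n-\tilde{s}\bigr)=\min(1+\gamma_p+\gamma_n,\tilde{w},1+\tilde{s})\le 2\tilde{\alpha}$, we get $\tol_\mathrm{r}^{-(1+\gamma_p+\gamma_n)/\tilde{\alpha}}\lesssim\tol_\mathrm{r}^{-2-\max(0,\ldots)}$, so it is absorbed into $\tol_\mathrm{r}^{-2}S_L^2$ and \eqref{eqn:mldlmc_optimal_work} follows. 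I expect the main obstacle to be bookkeeping rather than a single hard idea: tracking the rounding so that the coarse-level "$+1$" contributions stay below leading order (this is exactly where the assumption on $\tilde{\alpha}$ enters), and nailing down the exponent inside the $\max$ and the precise logarithmic power $2\mathcal{J}$ across all sign regimes of $r_1$ and $r_2$.
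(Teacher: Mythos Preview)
Your proposal is correct and follows essentially the same route as the paper's proof in Appendix~\ref{app:1}: choose $L$ from the bias constraint, plug the Lagrangian minimizer \eqref{eqn:mldlmc_optimal_samples} into the cost \eqref{eqn:mldlmc_work} to obtain the perfect-square form $\tol_\mathrm{r}^{-2}S_L^2$, bound $S_L$ as a geometric sum with ratios $r_1,r_2$, and finally use the hypothesis on $\tilde{\alpha}$ to absorb the rounding contribution $\sum_\ell P_\ell^{1+\gamma_p}N_\ell^{\gamma_n}\lesssim\tol_\mathrm{r}^{-(1+\gamma_p+\gamma_n)/\tilde{\alpha}}$. The only cosmetic difference is that the paper splits the rounding remainder into three pieces $W_2,W_3,W_4$ and treats the two geometric ratios via a case distinction on $\tilde{s}+1\gtrless\tilde{w}$, whereas you handle both at once through $\max(r_1,r_2)$ and the identity $1+\gamma_p+\gamma_n-\max(0,1+\gamma_p+\gamma_n-\tilde{w},\gamma_p+\gamma_n-\tilde{s})=\min(1+\gamma_p+\gamma_n,\tilde{w},1+\tilde{s})$; the arguments are equivalent.
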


\begin{proof}
    See Appendix~\ref{app:1}.
\end{proof}
{
\begin{rem}
\label{rem:optimal_parameters}
	\hspace{1mm} The complexity rate in \eqref{eqn:mldlmc_optimal_work} is independent of $\theta$ and $\tau$ or the dimension, $d$, of the MV-SDE. These only affect the associated constant in \eqref{eqn:mldlmc_optimal_work}. The optimization of the parameters $\theta$ and $\tau$ in the context of general {MLMC} methods has been investigated~\citep{optimal_mlmc,cont_mlmc}.
\end{rem}
}

\begin{rem}
	\hspace{1mm} In many applications, the variance and second moments of the level differences in the {multilevel DLMC} estimator are of the same order. In this case, we can demonstrate that $V_{1,\ell} \leq \E{\E{\Delta G^2_\ell \mid \{\mu_\ell,\mu_{\ell-1}\}}} \approx V_{2,\ell}$, implying $\tilde{w} \geq \tilde{s}$. 
\end{rem}
{
\begin{rem}
	\hspace{1mm} In the non-rare event context, $\tol_\mathrm{r}$ in~\eqref{eqn:mldlmc_objective_v2} and \eqref{eqn:mldlmc_optimal_work} is easily replaced by $\frac{\tol}{\abs{\E{G}}}$, where $\tol$ is the required absolute error tolerance.
\end{rem}
}
\begin{rem}[Kuramoto model]
    \label{rem:kuramoto}
    \hspace{1mm} $\gamma_p=1$ corresponds to a naïve empirical mean estimation method and $\gamma_n=1$ corresponds to the Euler--Maruyama scheme with a uniform time grid with time step $\frac{T}{N_\ell}$. Then, $\tilde{\alpha}=1$ due to the standard rates of weak convergence with respect to $P_\ell$~\citep{kolokoltsov2019mean,my_paper} and $N_\ell$ using the Euler--Maruyama scheme~\citep{sde_numerics}. We obtain better complexity than $\order{\tol_\mathrm{r}^{-4}}$ for the single-level DLMC estimator when $\tilde{s}+1 \geq \tilde{w}$ and $\tilde{w} > 1$. For this example with $G(x)=\cos(x)$, using the antithetic sampler ensures $\{\tilde{w},\tilde{s}\}=\{2,2\}$, leading to $\order{\tol_\mathrm{r}^{-3}}$ complexity. In contrast, the naïve sampler results in $\{\tilde{w},\tilde{s}\}=\{1,1\}$, leading to a complexity of $\order{\tol_\mathrm{r}^{-4}}$ (i.e., the same as the single-level DLMC estimator). Section~\ref{sec:results} presents these outcomes numerically. 
\end{rem}

Section~\ref{sec:is_mldlmc} devises an importance sampling scheme for the proposed {multilevel DLMC} estimator to address rare events associated with MV-SDEs.

\subsection{Importance Sampling Scheme for the {multilevel DLMC} Estimator for the Decoupled MV-SDE}
\label{sec:is_mldlmc}

We propose the following method to couple importance sampling with the proposed {multilevel DLMC} estimator. {We obtain one importance sampling control $\zeta$ off-line by solving the control PDE~\eqref{eqn:dmvsde_hjb_form3} derived in Section~\ref{sec:is_dlmc} using one realization of the stochastic particle system with a large number of particles $\Bar{P}$ and time steps $\Bar{N}$.} We apply the same control across all levels $\ell=0,\ldots, L$ in the proposed {multilevel DLMC} estimator \eqref{eqn:mldlmc_estimator}. Thus, we can rewrite the quantity of interest as 

\begin{equation}
    \label{eqn:mldlmc_is_estimator}
    \E{G_L} 
    = \sum_{\ell=0}^L \E{G_\ell - G_{\ell-1}}   
    = \sum_{\ell=0}^L \E{G^\zeta_\ell \mathbb{L}_\ell - G^\zeta_{\ell-1} \mathbb{L}_{\ell-1}},
\end{equation}

where
{
\begin{align}
    \label{eqn:mldlmc_is_qoi}
    G_\ell^\zeta &= G(\Bar{X}^{P_\ell|N_\ell}_\zeta(T)), \\
    \label{eqn:mldlmc_is_llhood}
    \mathbb{L}_\ell &= \prod_{n=0}^{N_\ell-1} \exp{-\frac{1}{2} \Delta t_\ell \norm{\zeta(t_{n,\ell},\Bar{X}^{P_\ell|N_\ell}_\zeta(t_{n,\ell}))}^2 -  \langle \Delta W (t_{n,\ell}), \zeta(t_{n,\ell},\Bar{X}^{P_\ell|N_\ell}_\zeta(t_{n,\ell})) \rangle} ;
\end{align}
}
{$\{\Bar{X}^{P_\ell|N_\ell}_\zeta(t_{n,\ell})\}_{n=0}^{N_\ell}$ is the time-discretized controlled, decoupled MV-SDE sample path at level $\ell$ (see Section~\ref{sec:dlmc_single}).} $\mathbb{L}_\ell$ is the likelihood factor at level $\ell$, and $\Delta t_\ell = \frac{T}{N_\ell}$ is the uniform time step of the Euler--Maruyama scheme for the decoupled MV-SDE at level $\ell$. {$\{\Delta W(t_{n,\ell})\}_{n=0}^{N_\ell} \sim \mathcal{N} (0, \sqrt{\Delta t_\ell} \mathbb{I}_d)$ are Wiener increments driving the dynamics of coarse and fine time-discretized paths of the decoupled MV-SDE at level $\ell$.} We define the proposed {multilevel DLMC} estimator with importance sampling as follows:

\begin{equation}
    \label{eqn:mldlmcis_estimator}
    \E{G_L} \approx \mathcal{A}^{\textrm{IS}}_{\textrm{MLMC}} = \sum_{\ell=0}^L \frac{1}{M_{1,\ell}} \sum_{i=1}^{M_{1,\ell}} \frac{1}{M_{2,\ell}} \sum_{j=1}^{M_{2,\ell}} (G^\mathrm{IS}_\ell - \mathcal{G}^\mathrm{IS}_{\ell-1})(\omega_{1:P_\ell}^{(\ell,i)},\bar{\omega}^{(\ell,j)}),
\end{equation} 
where $G^\mathrm{IS}_\ell(\omega_{1:P_\ell}^{(\ell,i)},\bar{\omega}^{(\ell,j)}) = G^\zeta_\ell \mathbb{L}_\ell(\omega_{1:P_\ell}^{(\ell,i)},\bar{\omega}^{(\ell,j)})$. 
{
\begin{rem}
\label{rem:work_dimension}
	\hspace{1mm} The complexity of the {multilevel DLMC} estimator with the above importance sampling scheme remains the same as in \eqref{eqn:mldlmc_optimal_work} because the optimal control problem \eqref{eqn:dmvsde_hjb_form3} is solved once and we do not include its cost in the complexity. 
\end{rem}

\begin{rem}
\label{rem:mlmc_variance}
\hspace{1mm} This is a natural extension to the importance sampling scheme previously developed for the single-level estimator~\citep{my_paper}. The optimal control $\zeta$ minimizes the single-level estimator variance of the conditional expectation $\E{G_\ell \mid \mu_\ell}$; thus, we expect a variance reduction for the level differences estimator. However, the optimal control $\zeta$ derived in Section~\ref{sec:optimal_is} minimizes $\Var{G_\ell}$ and not the variance of the MC estimator of $\E{\Delta G_\ell}$. Optimally, we must determine a control that minimizes $\Var{\Delta G_\ell}$ at each level $\ell$ of the {multilevel DLMC} estimator, which we leave for future work.
\end{rem}
}
Algorithm~\ref{alg:mldlmc_ld} in Appendix~\ref{app:2} implements the proposed importance sampling scheme in the level-difference estimator for the proposed {multilevel DLMC} method and can be easily modified for any other correlated sampler, such as the naïve sampler. Next, we build an adaptive {multilevel DLMC} algorithm that sequentially chooses parameters $L,\{M_{1,\ell},M_{2,\ell}\}_{\ell=0}^L$ satisfying constraints~\eqref{eqn:mldlmc_bias_constraint} and \eqref{eqn:mldlmc_stat_constraint}. The bias and variances $V_{1,\ell}$ and $V_{2,\ell}$ corresponding to level $\ell$ must be estimated cheaply and robustly to develop such an algorithm.

{
\subsection{Adaptive Multilevel Double Loop Monte Carlo Algorithm With Importance Sampling}
\label{sec:adaptive_mldlmcis}

\subsubsection{Estimating Bias at level $\ell$}

The bias for level $\ell$ can be approximated using Richardson extrapolation~\citep{mlmc_richardson_extra}:

\begin{equation}
	\label{eqn:mldlmc_bias_approx}
	\abs{\E{G} - \E{G_\ell}} \approx \left(1- \tau^{-\tilde{\alpha}} \right)^{-1} \abs{\E{G_{\ell+1} - G_\ell}} \cdot 
\end{equation}

Then, we use Algorithm~\ref{alg:mldlmc_ld} with at least $\underline{M}_1$ and $\underline{M}_2$ samples to obtain a DLMC estimation of the bias. To ensure robust bias estimates at all levels, we actually use $\hat{M}_1$ and $\hat{M}_2$ samples in Algorithm~\ref{alg:mldlmc_ld} defined as follows: 

\begin{align}
    \hat{M}_1 &= \max(M_{1,\ell},\underline{M}_1), \nonumber \\
    \label{eqn:bias_samples}
    \hat{M}_2 &= \max(M_{2,\ell},\underline{M}_2) \cdot
\end{align}

To ensure reliable bias estimates at higher levels ($\ell > 3$), we compare the DLMC bias estimator with the extrapolated bias from two previous levels using Assumption~\ref{ass:mldlmc_bias}:

\begin{equation}
\label{eqn:mldlmc_robust_bias}
	\abs{\E{G} - \E{G_\ell}} \approx \max \left( \frac{\abs{\E{\Delta G_{\ell+1}}}}{1- \tau^{-\tilde{\alpha}}}, \frac{\abs{\E{G} - \E{G_{\ell-1}}}}{\tau^{\tilde{\alpha}}}, \frac{\abs{\E{G} - \E{G_{\ell-2}}}}{\tau^{2\tilde{\alpha}}} \right), \quad \ell > 2 \cdot
\end{equation}

\subsubsection{Estimating $V_{1,\ell},V_{2,\ell}$ at Level $\ell$}

To compute the optimal number of samples required to satisfy the statistical error constraint~\eqref{eqn:mldlmc_var_constraint} using~\eqref{eqn:mldlmc_actual_samples}, we require cheap and robust empirical estimates of the variance terms $V_{1,\ell}$ and  $V_{2,\ell}$ for each level $\ell$. For this, we apply the DLMC algorithm (Algorithm~\ref{alg:mldlmc_est_variance} in Appendix~\ref{app:3}) with appropriately chosen values of $\tilde{M}_1$ and $\tilde{M}_2$. Algorithm~\ref{alg:mldlmc_est_variance} could become computationally expensive at higher levels. We exploit Assumption~\ref{ass:mldlmc_var} to avoid this overload and extrapolate variances for higher levels. For levels $\ell > 3$,

\begin{align}
\label{eqn:mldlmc_robust_var}
	V_{1,\ell} &= \max \left( \frac{V_{1,\ell-1}}{\tau^{\tilde{w}}}, \frac{V_{1,\ell-2}}{\tau^{2\tilde{w}}} \right), \\
	V_{2,\ell} &= \max \left( \frac{V_{2,\ell-1}}{\tau^{\tilde{s}}}, \frac{V_{2,\ell-2}}{\tau^{2\tilde{s}}} \right) \cdot \nonumber
\end{align}

\subsubsection{Relative Error Control}

To control the relative bias and statistical errors, we require a heuristic estimate of the quantity of interest $\E{G}$ itself. This estimate is continuously updated at each level $L$. At level $L=0$, we use the DLMC algorithm (Algorithm~\ref{alg:mldlmc_ld}) with $\bar{M}_1$ and $\bar{M}_2$ samples to obtain an initial robust but cheap estimate of $\E{G}$. For subsequent levels, we apply the {multilevel DLMC} estimator~\eqref{eqn:mldlmc_estimator} with optimal values for $\{M_{1,\ell},M_{2,\ell}\}_{\ell=0}^L$ to update the estimate. Putting all this together, we propose the adaptive {multilevel DLMC} algorithm (Algorithm~\ref{alg:mldlmc_is_adaptive}) for rare-event observables in the MV-SDE context. The IS control $\zeta$ in Algorithm~\ref{alg:mldlmc_is_adaptive} is obtained off-line by generating one realization of the empirical law $\mu^{\bar{P} \mid \bar{N}}$ with large $\bar{P}$ and $\bar{N}$ and then numerically solving control Equation~\eqref{eqn:dmvsde_hjb_form3} given $\mu^{\bar{P} \mid \bar{N}}$.

\begin{algorithm}[H] 
	\label{alg:mldlmc_is_adaptive}
	\caption{Adaptive {multilevel DLMC} algorithm with importance sampling}
	\SetAlgoLined
	\textbf{Input: } $P_0,N_0,\tol_{\mathrm{r}},\zeta(\cdot,\cdot)$,$\{\bar{M}_1,\bar{M}_2\}$,$\{\tilde{M}_1,\tilde{M}_2\}$,$\{\underline{M}_1,\underline{M}_2\}$; \\
	Estimate $\bar{G} = \E{G_0}$ with $P_0,N_0, \bar{M}_1,\bar{M}_2,\zeta(\cdot,\cdot)$ using \textbf{Algorithm \ref{alg:mldlmc_ld}}; \\
	Estimate and store $V_{1,0}$,$V_{2,0}$ with $P_0,N_0, \bar{M}_1,\bar{M}_2,\zeta(\cdot,\cdot)$ using \textbf{Algorithm \ref{alg:mldlmc_est_variance}};\\
	Set $L = 1$; \\
	\While{Bias $> \theta \tol_{\mathrm{r}} \bar{G}$}{
		$P_L=P_0 2^L,\quad N_L = N_0 2^{L}$; \\
		Estimate and store $V_{1,L},V_{2,L}$ with $P_L,N_L,\tilde{M}_1,\tilde{M}_2,\zeta(\cdot,\cdot)$ using \textbf{Algorithm \ref{alg:mldlmc_est_variance}}; \\
		Compute optimal $\{M_{1,\ell},M_{2,\ell}\}_{\ell=0}^L$ using \eqref{eqn:mldlmc_actual_samples}; \\
		Estimate bias using \eqref{eqn:mldlmc_bias_approx} with $P_{L+1},N_{L+1},\Hat{M}_1,\Hat{M}_2,\zeta(\cdot,\cdot)$ using \eqref{eqn:bias_samples} and \textbf{Algorithm \ref{alg:mldlmc_ld}}; \\
		Reevaluate $\bar{G}=\mathbb{E}[G_0] + \sum_{\ell=1}^L \mathbb{E}[\Delta G_\ell]$ with $\{M_{1,\ell},M_{2,\ell}\}_{\ell=0}^L,\zeta(\cdot,\cdot)$ using \textbf{Algorithm \ref{alg:mldlmc_ld}} for each $\ell$; \\
		$L \longleftarrow L+1$;
	}
	$\mathcal{A}_{\textrm{MLMC}} = \bar{G}$.
\end{algorithm}
}

\section{Numerical Results}
\label{sec:results}

This section provides numerical evidence for the assumptions and rates of {computational complexity} derived in Section~\ref{sec:mldlmc}. The results outlined below focus on the Kuramoto model (see Section~\ref{sec:kuramoto}) with the following settings: $\sigma=0.4$, $T=1$, $(x_0)_p \sim \mathcal{N}(0,0.2)$, and {$\xi_p \sim \mathcal{U}(-0.2,0.2)$} for all $p = 1,\ldots,P$. We set the parameters as follows: $\tau=2$, $\theta=0.5$, and {$\nu=0.05$}. The complexity rates ($\gamma_p=1$ and $\gamma_n=1$) are explained in Remark~\ref{rem:kuramoto}. Moreover, we set the hierarchies for the {multilevel DLMC} estimator as

\begin{equation}
    P_\ell = 5 \times 2^\ell, \quad N_\ell = 4 \times 2^\ell \cdot
\end{equation}

We implement the proposed {multilevel DLMC} method for nonrare and rare-event observables and investigate the {computational complexity} compared with the single-level DLMC estimator. 

\subsection{Objective Function $G(x)=\cos(x)$}

First, we numerically verify Assumptions~\ref{ass:mldlmc_bias} and \ref{ass:mldlmc_var} for the smooth, nonrare observable $G(x) = \cos(x)$ without importance sampling to determine constants $\tilde{\alpha},\tilde{w},\tilde{s}$ for the Kuramoto model. Figure~\ref{fig:mldlmc_constants} presents the estimated bias \eqref{eqn:mldlmc_bias_approx} using Algorithm~\ref{alg:mldlmc_ld}, and $V_{1,\ell}$ and $V_{2,\ell}$ with respect to $\ell$ using Algorithm~\ref{alg:mldlmc_est_variance}. Thus, Assumptions~\ref{ass:mldlmc_bias} and \ref{ass:mldlmc_var} are verified with $\tilde{\alpha}=1$ and $\{\tilde{w},\tilde{s}\}=\{1,1\}$ for the naïve sampler \eqref{eqn:naive_sampler} and with $\{\tilde{w},\tilde{s}\}=\{2,2\}$ for the antithetic sampler \eqref{eqn:antithetic_sampler}. Improved variance convergence rates for the antithetic sampler imply a complexity of $\order{\tol^{-3}}$ for the proposed {multilevel DLMC} estimator, compared with $\order{\tol^{-4}}$ for the naïve sampler (see Theorem~\ref{th:mldlmc_complexity}) to achieve a prescribed absolute error tolerance $\tol$. Thus, we use the antithetic sampler in the proposed adaptive algorithm (Algorithm~\ref{alg:mldlmc_is_adaptive}).

\begin{figure}
    \centering
    \begin{subfigure}[b]{0.45\textwidth}
        \centering
        \includegraphics[width=\textwidth]{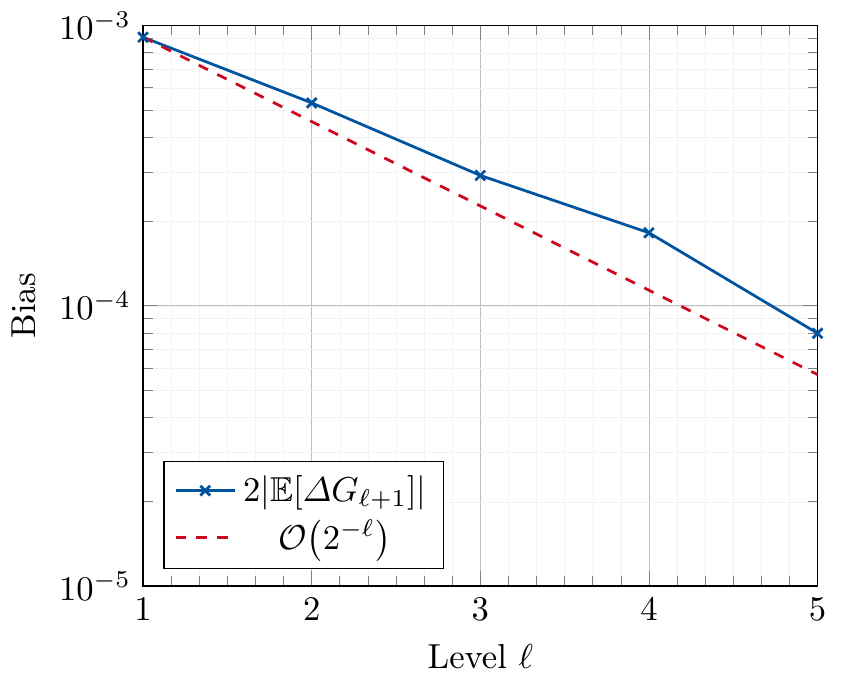}
        \caption{Assumption~\ref{ass:mldlmc_bias}: Double loop Monte Carlo bias estimator using Algorithm~\ref{alg:mldlmc_ld} with inputs $M_1=10^3$ and $M_2=10^3$ with respect to level $\ell$.}
        \label{fig:mldlmc_alpha}
    \end{subfigure}
    \hfill
    \begin{subfigure}[b]{0.45\textwidth}
        \centering
        \includegraphics[width=\textwidth]{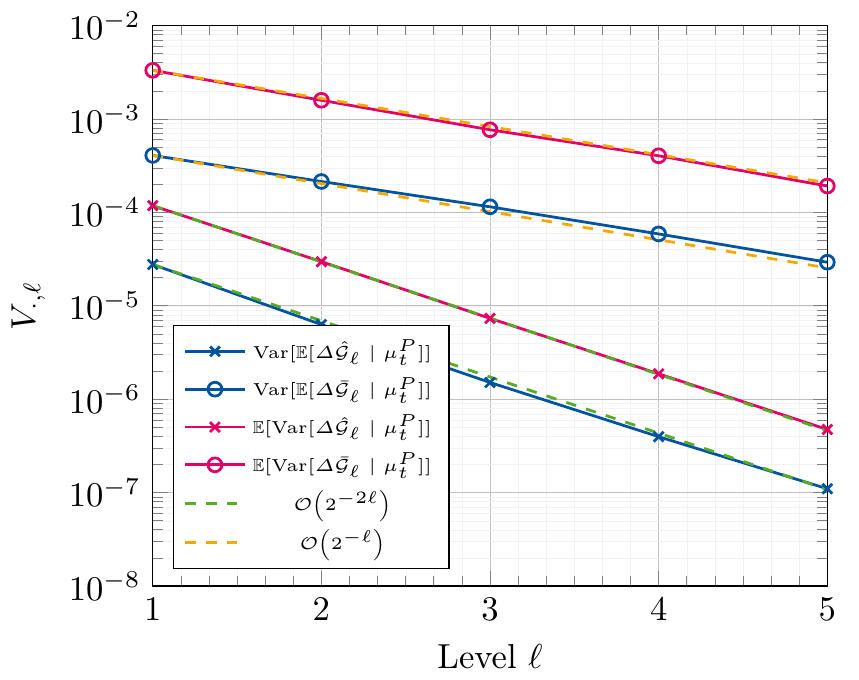}
        \caption{Assumption~\ref{ass:mldlmc_var}: Double loop Monte Carlo estimates for $V_{1,\ell}$ and $V_{2,\ell}$ using Algorithm~\ref{alg:mldlmc_est_variance} with inputs $M_1=10^2$ and $M_2=10^4$ with respect to level $\ell$.}
        \label{fig:mldlmc_c2l}
    \end{subfigure}
    \caption{Convergence rates of level differences using antithetic ($\hat{\mathcal{G}}$) and naïve ($\bar{\mathcal{G}}$) samplers for the Kuramoto model with $G(x)=\cos{x}$.}
    \label{fig:mldlmc_constants}
\end{figure}


\begin{figure}[t]
    \centering
    \begin{subfigure}[b]{0.45\textwidth}
        \centering
        \includegraphics[width=\textwidth]{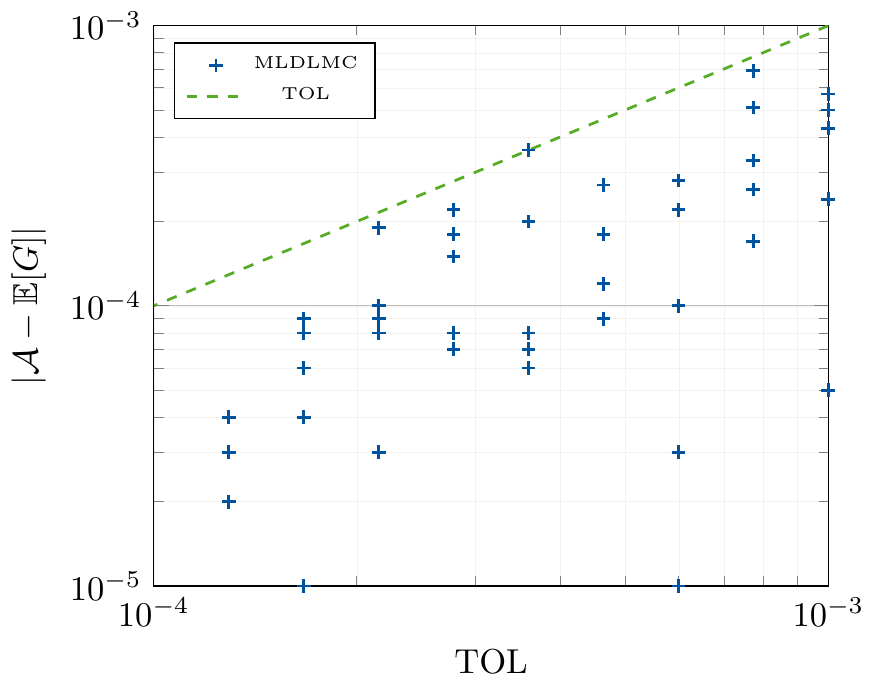}
        \caption{Global estimator error with respect to tolerance $\tol$}
        \label{fig:mlmc_globalerr}
    \end{subfigure}
    \hfill
	\begin{subfigure}[b]{0.45\textwidth}
        \centering
        \includegraphics[width=\textwidth]{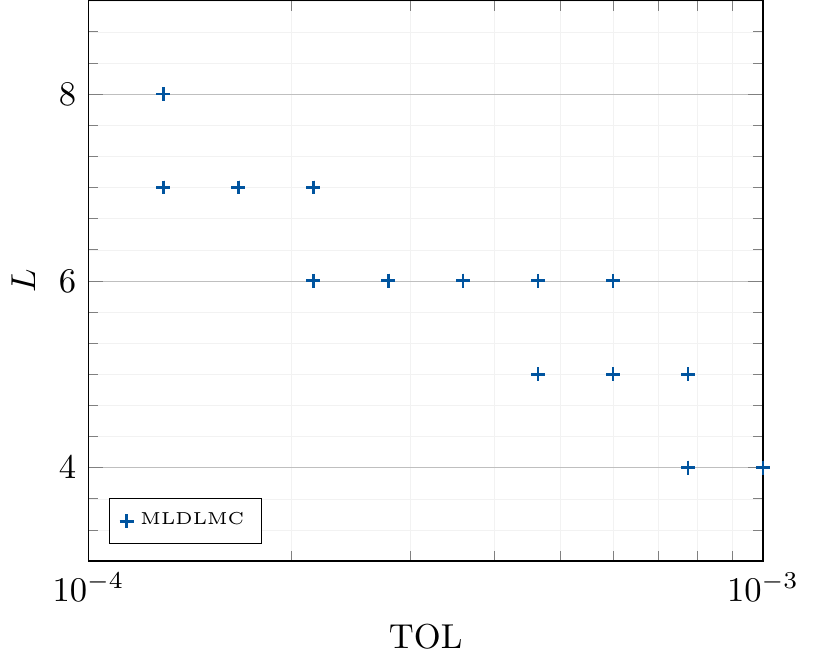}
        \caption{Estimator level $L$ required to satisfy the tolerance $\tol$.}
        \label{fig:mlmc_level}
    \end{subfigure}
    \hfill
    \begin{subfigure}[b]{0.45\textwidth}
        \centering
        \includegraphics[width=\textwidth]{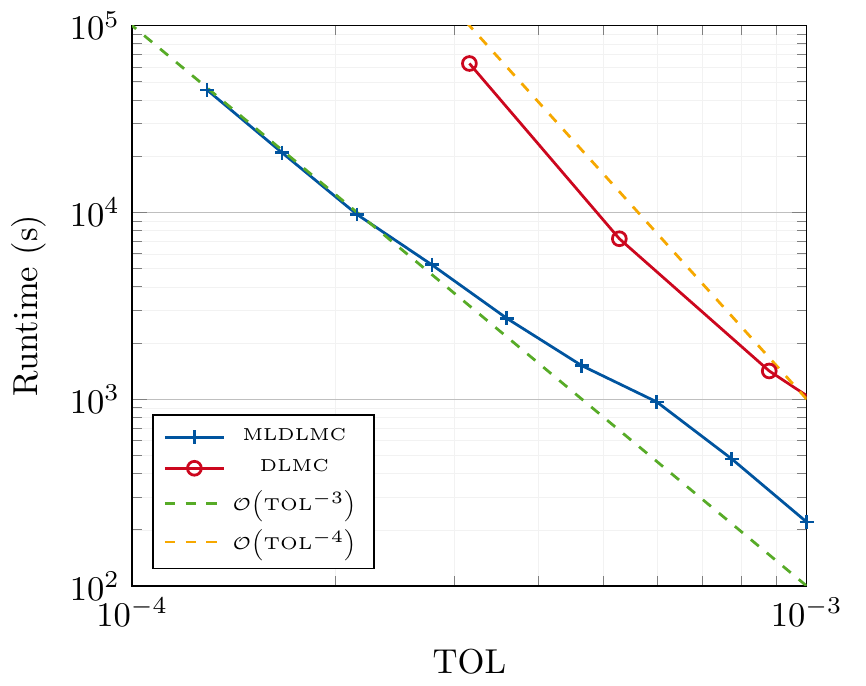}
        \caption{Average computational runtime with respect to the tolerance $\tol$.}
        \label{fig:mlmc_runtime}
    \end{subfigure}
    \hfill
    \begin{subfigure}[b]{0.45\textwidth}
        \centering
        \includegraphics[width=\textwidth]{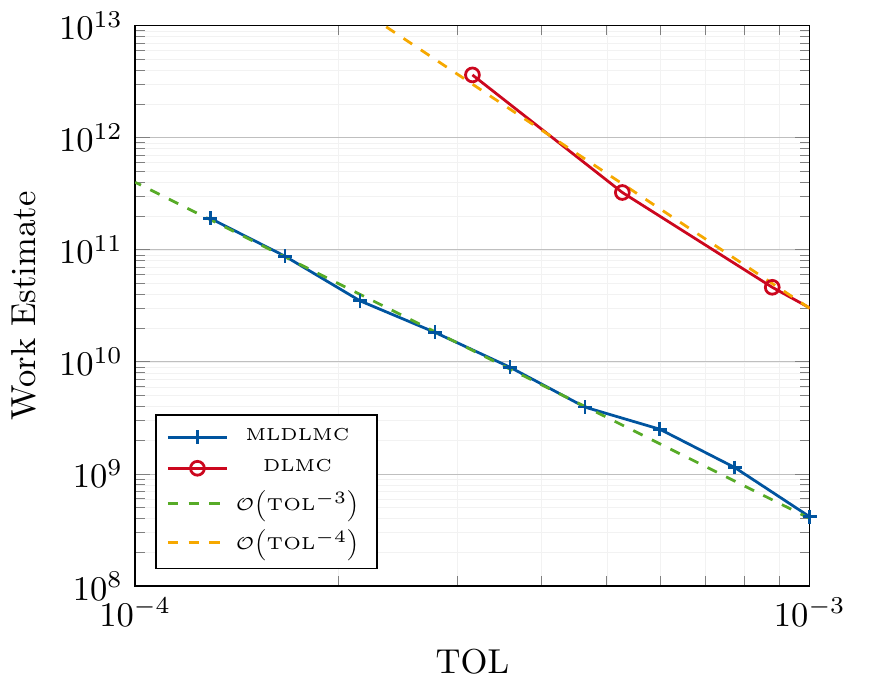}
        \caption{Average {computational cost} estimate with respect to the  tolerance $\tol$}
        \label{fig:mlmc_work}
    \end{subfigure}
    \caption{Algorithm~\ref{alg:mldlmc_is_adaptive} applied to the Kuramoto model for $G(x) = \cos{x}$. (MLDLMC: multilevel double loop Monte Carlo) . }
    \label{fig:mldlmc}
\end{figure}

Next, we implement the proposed {multilevel DLMC} algorithm (Algorithm~\ref{alg:mldlmc_is_adaptive}) with inputs $\{\bar{M}_1,\bar{M}_2\} = \{1000,100\}$, $\{\tilde{M}_1,\tilde{M}_2\} = \{25,1000\}$, and $\{\underline{M}_1,\underline{M}_2\} = \{100,50\}$ for this observable {with tuned parameters .} In this case, importance sampling is not required because this is not a rare-event observable, i.e. we set $\zeta(t,x) = 0,\forall (t,x) \in [0, T] \cross \mathbb{R}^d$. Figure~\ref{fig:mldlmc} depicts the results of Algorithm~\ref{alg:mldlmc_is_adaptive} in this setting to numerically verify the complexity rates obtained from Theorem~\ref{th:mldlmc_complexity}. Figure~\ref{fig:mlmc_globalerr} illustrates the exact {multilevel DLMC} estimator error, estimated using a reference {multilevel DLMC} approximation with $\tol = 10^{-4}$, for separate runs of Algorithm~\ref{alg:mldlmc_is_adaptive} for different prescribed absolute error tolerances $\tol$. The adaptive {multilevel DLMC} algorithm successfully satisfies the error constraint \eqref{eqn:mldlmc_objective_v2} at the 95\% confidence level (corresponding to $\nu=0.05$). {Figure~\ref{fig:mlmc_level} presents the number of levels $L$ required to satisfy the bias constraint in~\eqref{eqn:mldlmc_bias_constraint} for each of the separate runs of Algorithm~\ref{alg:mldlmc_is_adaptive} for different prescribed error tolerances $\tol$. According to Theorem~\ref{th:mldlmc_complexity}, the number of levels should increase by $\order{\log (\tol^{-1})}$.} Figure~\ref{fig:mlmc_runtime} displays the average computational runtime for Algorithm~\ref{alg:mldlmc_is_adaptive} for various error tolerances. The runtimes in Figure~\ref{fig:mlmc_runtime} include the cost of estimating the bias, $V_{1,\ell}$ and $V_{2,\ell}$, in Algorithm~\ref{alg:mldlmc_is_adaptive}. The runtimes for sufficiently small tolerances follow the predicted $\order{\tol^{-3}}$ rate from Theorem~\ref{th:mldlmc_complexity}. Figure~\ref{fig:mlmc_work} indicates the average estimated {computational cost} of the {multilevel DLMC} estimator for various $\tol$ values. {This estimated {computational cost} is computed using~\eqref{eqn:mldlmc_work}:

\begin{equation}
\label{eqn:mldlmc_cost}
	\text{{Computational cost}}[\mathcal{A}_{\mathrm{MLMC}}] \approx \sum_{\ell=0}^L \left(M_{1,\ell} P_\ell^{1+\gamma_p} N_\ell^{\gamma_n} + M_{1,\ell} M_{2,\ell} P_\ell^{\gamma_p} N_\ell^{\gamma_n}\right) \cdot
\end{equation}
}
Figures \ref{fig:mlmc_runtime} and \ref{fig:mlmc_work} verify that the proposed {multilevel DLMC} estimator with the antithetic sampler outperforms the single-level DLMC estimator, achieving one order of complexity reduction from $\order{\tol^{-4}}$ to $\order{\tol^{-3}}$.

\subsection{Rare-event Objective Function}
{
To test the importance sampling scheme, we consider the Kuramoto model with the following Lipschitz rare-event observable $G(x) = \Psi(x-K)$ for a sufficiently large threshold $K \in \mathbb{R}$, where

\begin{equation}
    \label{eqn:psi_defn}
    \Psi(x) = \begin{cases}
        0 &, \quad x<-0.5 \\
        0.5 + x &, \quad -0.5<x<0.5 \\
        1 &, \quad x > 0.5
    \end{cases}
    \cdot
\end{equation}
}
We use the importance sampling scheme introduced in Section~\ref{sec:is_mldlmc} with importance sampling control $\zeta$ obtained by solving \eqref{eqn:dmvsde_hjb_form3} numerically using finite differences and linear interpolation throughout the domain. First, we verify the variance reduction in the level-difference estimators using this $\zeta$ in two numerical experiments, whose results are depicted in Figure~\ref{fig:test_mlmcis}. 

\begin{figure}[t]
    \centering
    \begin{subfigure}[b]{0.45\textwidth}
        \centering
        \includegraphics[width=\textwidth]{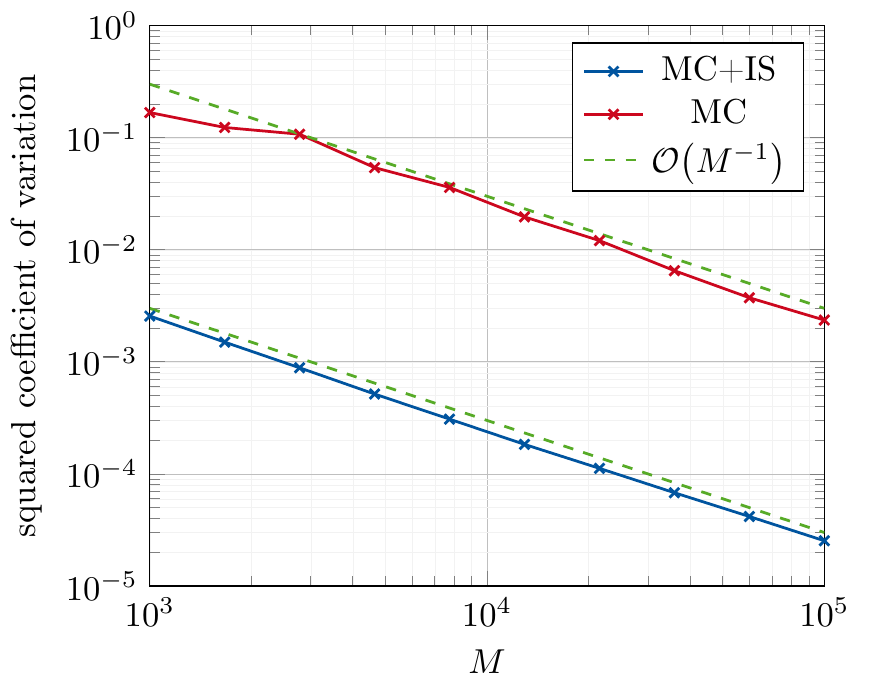}
        \caption{Experiment 1: Squared coefficient of variation of the double loop Monte Carlo estimator for $\E{\Delta G_3 \mid \mu^{\Bar{P}|\Bar{N}}}$ with and without importance sampling with respect to the number of sample paths $M$.}
        \label{fig:test1_mlmcis}
    \end{subfigure}
    \hfill
    \begin{subfigure}[b]{0.45\textwidth}
        \centering
        \includegraphics[width=\textwidth]{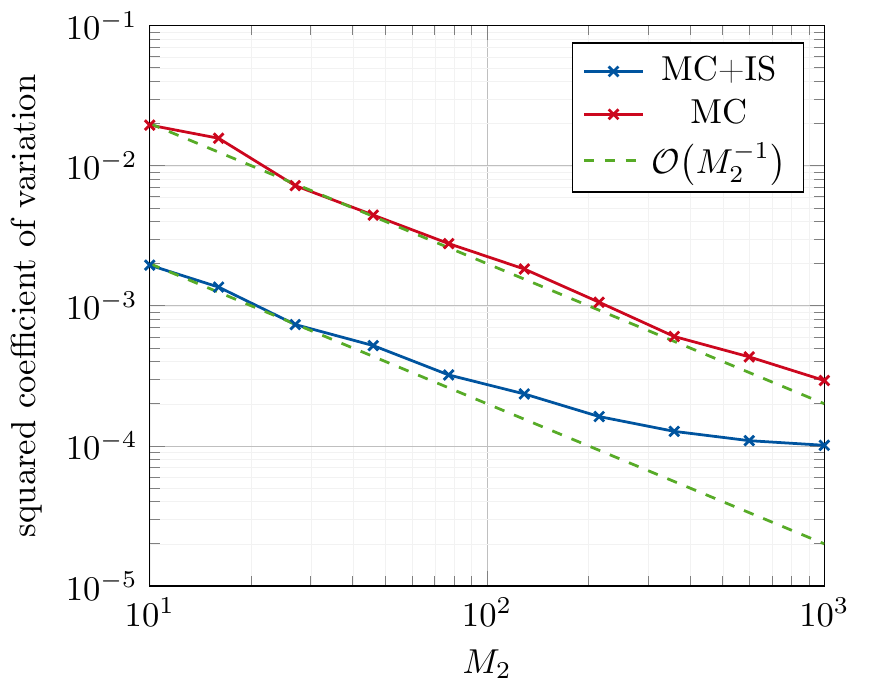}
        \caption{Experiment 2: Squared coefficient of variation of the double loop Monte Carlo estimator for $\E{\Delta G_3}$ with and without importance sampling with respect to the number of sample paths in inner loop $M_2$.}
        \label{fig:test2_mlmcis}
    \end{subfigure}
    \caption{Numerical experiments verifying variance reduction in the double loop Monte Carlo estimator for level differences using importance sampling on the Kuramoto model for $G(x)=\Psi(x-K)$ with $K=2.5$.}
    \label{fig:test_mlmcis}
\end{figure}

{
Note that $\zeta$ is the optimal importance sampling control for the decoupled MV-SDE \eqref{eqn:decoupled_mvsde} and not the particle system \eqref{eqn:strong_approx_mvsde}. With this scheme, we reduce the variance of the inner expectation in \eqref{eqn:total_exp_mvsde}. Consequently, we assess the variance reduction in the MC estimator of the inner expectation in the first experiment. Experiment~1 investigates variance reduction on the DLMC estimator of $\E{\Delta G_\ell \mid \mu^{\Bar{P}|\Bar{N}}}$. We run DLMC Algorithm~\ref{alg:mldlmc_ld} with $M_1 = 1$ and $\ell = 3$ for different values of $M_2$. To generate Figure~\ref{fig:test1_mlmcis}, we obtained the importance sampling control $\zeta$ using one realization of the empirical law $\mu^{\bar{P}|\bar{N}}$ with $\bar{P}=200$ particles and $\bar{N}=100$ time steps.} Figure~\ref{fig:test1_mlmcis} compares squared coefficients of variation for the DLMC estimator with and without importance sampling versus the number of sample paths $M$. The results verify that the importance sampling scheme reduces the squared coefficient of variation approximately 100-fold.
{
In Experiment 2, we verify the variance reduction for the DLMC estimator of $\E{\Delta G_\ell}$ with importance sampling using DLMC Algorithm~\ref{alg:mldlmc_ld} with $\ell=3$, $M_1=10^3$ for different values of $M_2$. To generate Figure~\ref{fig:test2_mlmcis}, we obtained importance sampling control $\zeta$ by solving control Equation~\eqref{eqn:dmvsde_hjb_form3} off-line using an independent realization of the empirical law $\mu^{\bar{P}|\bar{N}}$ with $\bar{P}=1000$ particles and $\bar{N}=100$ time steps. The results verify a significantly reduced squared coefficient of variation (approximately one order of magnitude) with importance sampling. The estimator variance of $\E{\Delta G_\ell}$ is given by $\frac{V_{1,\ell}}{M_{1,\ell}} + \frac{V_{2,\ell}}{M_{1,\ell} M_{2,\ell}}$ \eqref{eqn:mldlmc_est_variance}. We notice convergence of the squared coefficient of variation as the second term vanishes with a large $M_2$ value.}

\begin{figure}[t]
    \centering
    \begin{subfigure}[b]{0.45\textwidth}
        \centering
        \includegraphics[width=\textwidth]{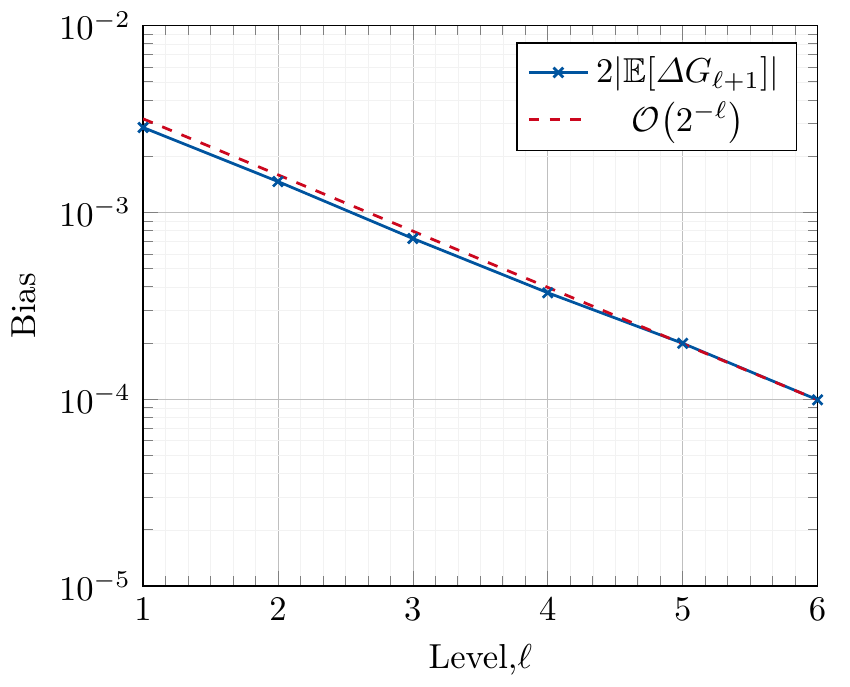}
        \caption{Assumption~\ref{ass:mldlmc_bias} for $G(x)=\Psi(x-K)$: Double loop Monte Carlo bias estimator using Algorithm~\ref{alg:mldlmc_ld} with inputs $M_1=10^2$ and $M_2=10^4$ with respect to $\ell$.}
        \label{fig:mldlmcis_bias}
    \end{subfigure}
    \hfill
    \begin{subfigure}[b]{0.45\textwidth}
        \centering
        \includegraphics[width=\textwidth]{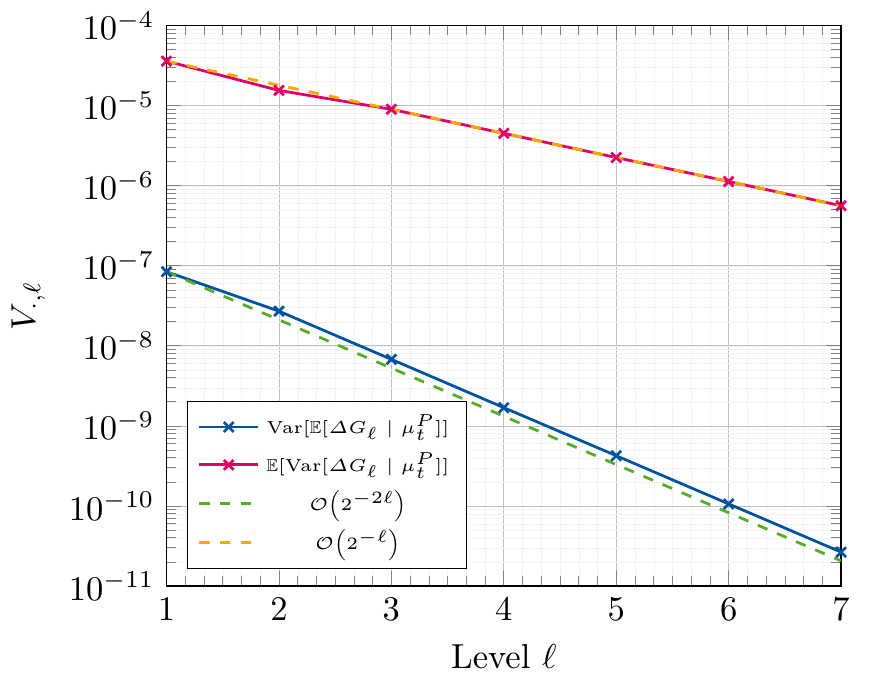}
        \caption{Assumption~\ref{ass:mldlmc_var} for $G(x)=\Psi(x-K)$: Double loop Monte Carlo estimator for $V_{1,\ell},V_{2,\ell}$ using Algorithm~\ref{alg:mldlmc_est_variance} with inputs $M_1=10^2$ and $M_2=10^4$ with respect to $\ell$.}
        \label{fig:mldlmcis_v1l}
    \end{subfigure}
    \caption{Convergence rates of level differences using the antithetic estimator ($\hat{\mathcal{G}}$) for the Kuramoto model with $G(x)=\Psi(x-K)$.}
    \label{fig:mldlmcis_constants}
\end{figure}
{
We numerically verify Assumptions~\ref{ass:mldlmc_bias} and \ref{ass:mldlmc_var} for this rare-event observable to determine constants $\tilde{\alpha}$, $\tilde{w}$, and $\tilde{s}$ for the Kuramoto model with importance sampling.} Figure~\ref{fig:mldlmcis_constants} verifies Assumptions~\ref{ass:mldlmc_bias} and \ref{ass:mldlmc_var} with $\tilde{\alpha}=1$ and $\{\tilde{w},\tilde{s}\}=\{2,1\}$ for the antithetic sampler \eqref{eqn:antithetic_sampler}. These rates imply a complexity of $\order{\tol_\mathrm{r}^{-3}}$ for the proposed {multilevel DLMC} estimator with importance sampling according to Theorem~\ref{th:mldlmc_complexity}.

\begin{figure}[t]
    \centering
    \begin{subfigure}[b]{0.45\textwidth}
        \centering
        \includegraphics[width=\textwidth]{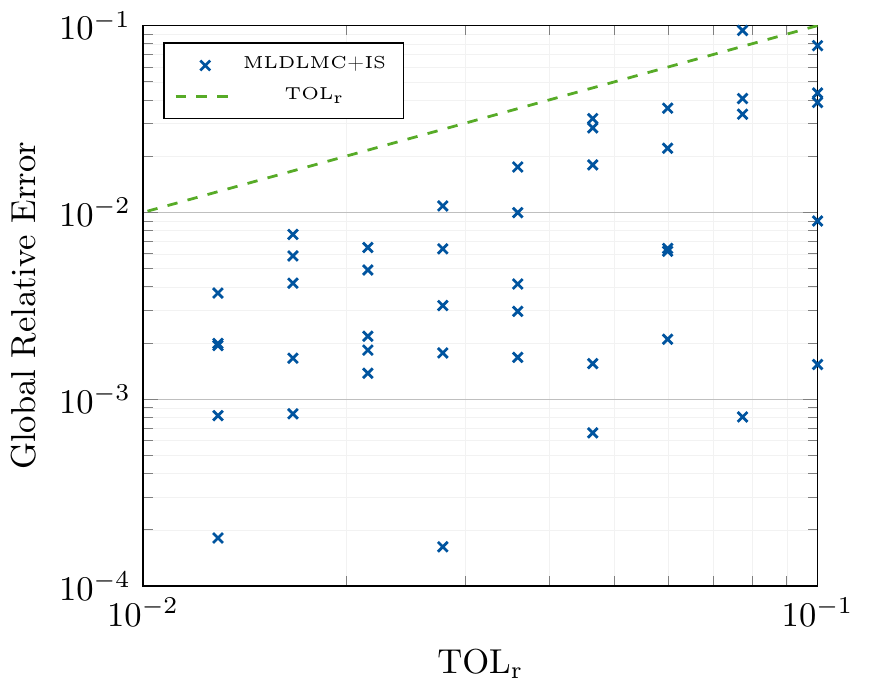}
        \caption{Relative error of the estimator with respect to the relative tolerance $\tol_{\mathrm{r}}$ for $K=2.5$.}
        \label{fig:mlmcis_globalerr}
    \end{subfigure}
    \hfill
    \begin{subfigure}[b]{0.45\textwidth}
        \centering
        \includegraphics[width=\textwidth]{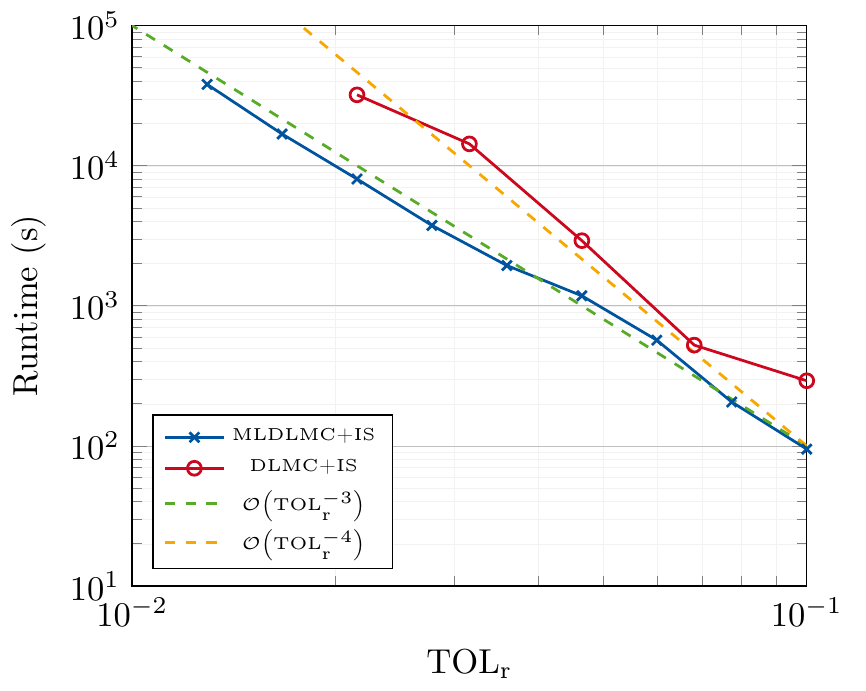}
        \caption{Average computational runtime with respect to the relative tolerance $\tol_{\mathrm{r}}$ for $K=2.5$.}
        \label{fig:mlmcis_runtime}
    \end{subfigure}
    \hfill
    \begin{subfigure}[b]{0.45\textwidth}
        \centering
        \includegraphics[width=\textwidth]{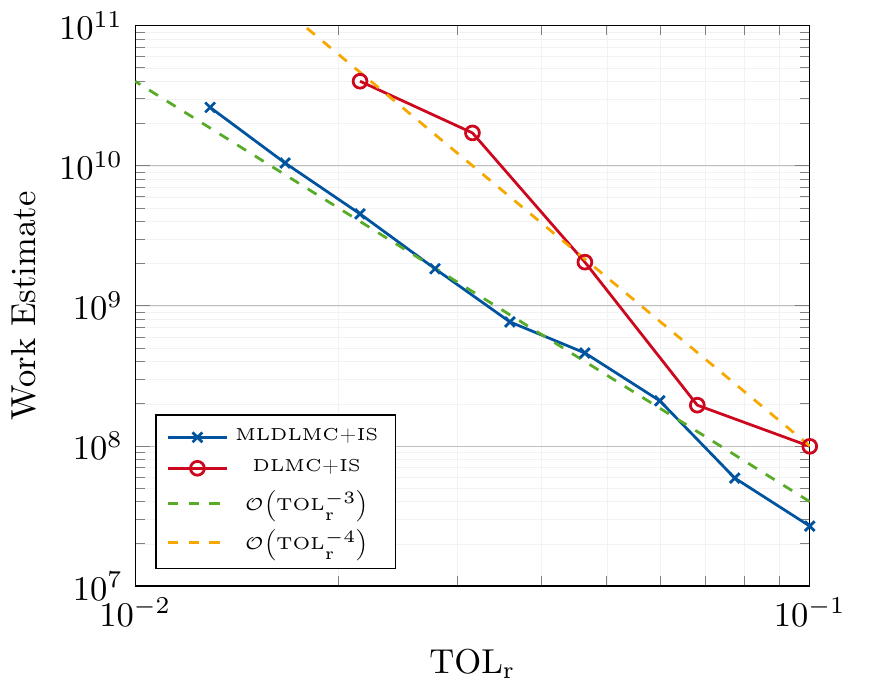}
        \caption{Average {computational cost} estimate with respect to the relative tolerance $\tol_{\mathrm{r}}$ for $K=2.5$.}
        \label{fig:mlmcis_work}
    \end{subfigure}
	\hfill
    \begin{subfigure}[b]{0.45\textwidth}
        \centering
        \includegraphics[width=\textwidth]{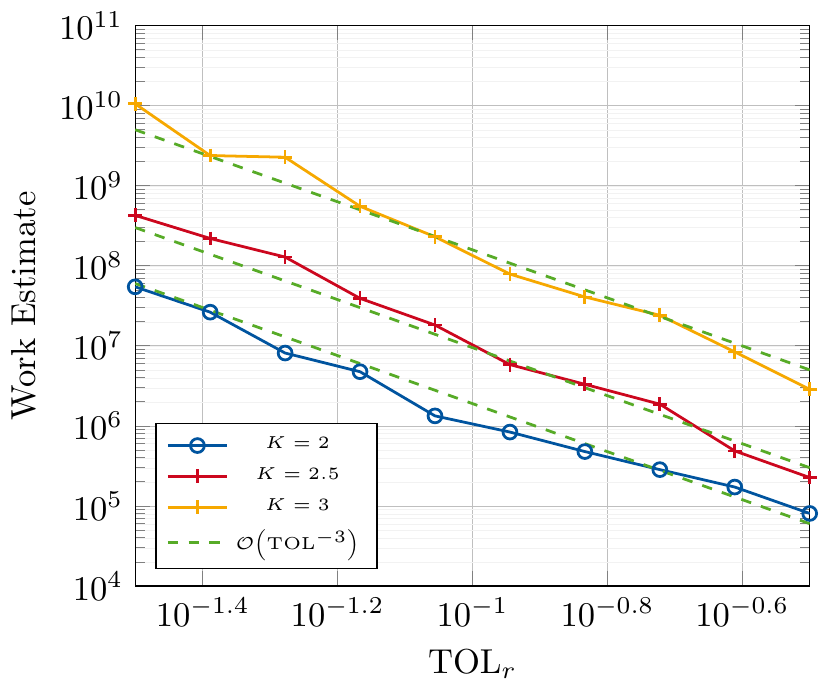}
        \caption{Average {computational cost} estimate with respect to the  relative tolerance $\tol_{\mathrm{r}}$ for different threshold ($K$) values.}
        \label{fig:mlmcis_work_k}
    \end{subfigure}
    \caption{Algorithm~\ref{alg:mldlmc_is_adaptive} applied to the Kuramoto model for $G(x) = \Psi(x-K)$.}
    \label{fig:mldlmc_is}
\end{figure}

We implement the proposed adaptive {multilevel DLMC} algorithm  (Algorithm~\ref{alg:mldlmc_is_adaptive}) {with inputs $\{\bar{M}_1,\bar{M}_2\} = \{1000,100\}$, $\{\tilde{M}_1,\tilde{M}_2\} = \{25,100\}$, and $\{\underline{M}_1,\underline{M}_2\} = \{100,50\}$} using importance sampling for the 1D Kuramoto model. We use $\bar{P}=1000$ particles and $\bar{N}=100$ time steps to independently obtain one empirical realization of $\mu^{\bar{P}|\bar{N}}$ to compute importance sampling control $\zeta$. Figure~\ref{fig:mldlmc_is} presents the results of Algorithm~\ref{alg:mldlmc_is_adaptive} in this setting and numerically verifies the complexity rates obtained in Theorem~\ref{th:mldlmc_complexity}. {We observe that $K=2.5$ corresponds to an expected value $\approx 3.2 \times 10^{-3}$, $K=2$ corresponds to an expected value of $\approx 2.3 \times 10^{-2}$, and $K=3$ corresponds to an expected value of $\approx 3.1 \times 10^{-4}$, all sufficiently rare events. }

Figures~\ref{fig:mlmcis_globalerr}, \ref{fig:mlmcis_runtime}, and \ref{fig:mlmcis_work} correspond to $K=2.5$. Figure~\ref{fig:mlmcis_globalerr} presents exact relative error for the proposed {multilevel DLMC} estimator, estimated using a reference {multilevel DLMC} approximation with $\tol_{\mathrm{r}} = 1\%$, for multiple runs of Algorithm~\ref{alg:mldlmc_is_adaptive} with various prescribed relative error tolerances. {Figure~\ref{fig:mlmcis_globalerr} verifies that the {multilevel DLMC} estimator with importance sampling satisfies error constraints with 95\% confidence (corresponding to $\nu = 0.05$).} The runtimes in Figure~\ref{fig:mlmcis_runtime} include the estimation time of the bias, $V_{1,\ell}$ and $V_{2,\ell}$, in adaptive Algorithm~\ref{alg:mldlmc_is_adaptive}. Figure~\ref{fig:mlmcis_runtime} confirms that the average computational runtime closely follows the predicted theoretical rate $\order{\tol_{\mathrm{r}}^{-3}}$ for the entire range of relative tolerances. Figure~\ref{fig:mlmcis_work} depicts the average {computational cost} estimate for the {multilevel DLMC} estimator over the prescribed $\tol_{\mathrm{r}}$ values. {The cost estimate is computed using \eqref{eqn:mldlmc_cost}.} Figures \ref{fig:mlmcis_runtime} and \ref{fig:mlmcis_work} both display a complexity of $\order{\tol_{\mathrm{r}}^{-3}}$ for the {multilevel DLMC} estimator with importance sampling and the antithetic sampler, achieving one order complexity reduction compared with the single-level DLMC estimator with the same importance sampling scheme. {Figure \ref{fig:mlmcis_work_k} plots the average work estimate for various threshold values $K$ over a range of relative error tolerances. This verifies that the complexity rates are independent of parameter $K$.}


\section{Conclusion}
\label{sec:conclusion}

Under certain assumptions that can be numerically verified, we have theoretically and numerically demonstrated the improvement of {multilevel DLMC} compared with the single-level DLMC estimator used to approximate rare-event quantities of interest expressed as an expectation of a Lipschitz observable of the solution to stochastic particle systems in the mean-field limit. We used the importance sampling scheme introduced in \citep{my_paper} for all level-difference estimators in the proposed {multilevel DLMC} estimator and verified substantial variance reduction numerically. The proposed novel {multilevel DLMC} estimator achieved a complexity of $\order{\tol_{\mathrm{r}}^{-3}}$ for the treated example, one order less than the single-level DLMC estimator for the prescribed relative error tolerance $\tol_{\mathrm{r}}$. {Integrating the importance sampling scheme into the {MLMC} estimator ensured that the constant associated with its complexity also reduced significantly compared with the {MLMC} estimator for smooth, nonrare observables introduced in \citep{mlmc_mvsde}.}

Future studies could include extending the importance sampling scheme to higher-dimensional problems, using model reduction techniques or stochastic gradient-based learning methods to solve the associated higher-dimensional stochastic optimal control problem (see Remark~\ref{rem:hjb_dimension}). The importance sampling scheme could be further improved by solving an optimal control problem minimizing the level-difference estimator variance rather than the single-level DLMC estimator (see Remark~\ref{rem:mlmc_variance}). The {multilevel DLMC} algorithm could be optimized for determining the optimal parameters $\tau$ and $\theta$ \citep{optimal_mlmc} or integrating a continuation {MLMC} algorithm~\citep{cont_mlmc} (see Remark~\ref{rem:optimal_parameters}).
The present analysis could be extended to numerically address non-Lipschitz rare-event observables, such as the indicator function, to compute rare-event probabilities. Multiple discretization parameters for the decoupled MV-SDE ($P, N$) suggest extending the current work to a multi-index Monte Carlo~\citep{tempone_mimc,mlmc_mvsde} setting to further reduce computational complexity. 

\appendix
\label{sec:appendix}

\section{{Computational cost} of the Multilevel Double Loop Monte Carlo Estimator \eqref{eqn:mldlmc_estimator}}
\label{app:work}
{
For a given level $\ell$, we estimate $\E{\Delta G_\ell} = \E{G_\ell - G_{\ell-1}}$. First, we derive the {computational cost} of estimating $\E{G_\ell}$ using DLMC Algorithm~\ref{alg:dlmc_revised}. We consider the discretization $0=t_0<t_1<t_2<\ldots<t_{N_\ell} = T$ of the time domain $[0, T]$ with $N_\ell$ equal time steps (i.e., $t_n = n \Delta t, \quad n=0,1,\ldots,N_\ell$ and $\Delta t = T/{N_\ell}$). First, we simulate the particle system at level $\ell$. For each particle $p \in 1,\ldots,P_\ell$ and each time step $n \in 1,\ldots,N_\ell$, the Euler--Maruyama time-stepping scheme is written as

\begin{align}
	\label{eqn:particle_system_euler}
	X_p^{P_\ell \mid N_\ell}(t_{n+1}) &= X_p^{P_\ell \mid N_\ell}(t_{n}) + b \left( X_p^{P_\ell \mid N_\ell}(t_n), \frac{1}{P_\ell} \sum_{j=1}^{P_\ell} \kappa_1 \left(X_p^{P_\ell \mid N_\ell}(t_n), X_j^{P_\ell \mid N_\ell}(t_n) \right) \right) \Delta t \nonumber \\
	&\quad + \sigma \left( X_p^{P_\ell \mid N_\ell}(t_n), \frac{1}{P_\ell} \sum_{j=1}^{P_\ell} \kappa_2 \left(X_p^{P_\ell \mid N_\ell}(t_n), X_j^{P_\ell \mid N_\ell}(t_n) \right) \right) \Delta W(t_n), \quad n = 1,\ldots,N_\ell ,\\
	 X_p^{P_\ell \mid N_\ell}(t_0) &\sim \mu_0 \cdot \nonumber
\end{align}

\begin{algorithm} 
    \caption{General double loop Monte Carlo algorithm for decoupled McKean--Vlasov Stochastic Differential Equation}
\label{alg:dlmc_revised}
    \SetAlgoLined
    \textbf{Inputs: } $P,N,M_{1},M_2$; \\
    \For{$m_1=1,\ldots,M_1$}{
    Generate $\mu^{P|N} \left(\omega_{1:P}^{(m_1)} \right)$ with $P$-particle system and $N$ time steps using \eqref{eqn:particle_system_euler}; \\
    \For{$m_2=1,\ldots,M_2$}{
    Given $\mu^{P|N} \left(\omega_{1:P}^{(m_1)} \right)$, generate sample path of decoupled MV-SDE with $N$ time steps with $\bar{\omega}^{(m_2)}$ using \eqref{eqn:dmvsde_euler};\\
    Compute $G\left(\Bar{X}^{P|N}(T)\right) \left(\omega_{1:P}^{(m_1)}, \bar{\omega}^{(m_2)}\right)$; \\
    }
    }
    Approximate $\E{G\left(\Bar{X}^{P|N}(T)\right)}$ by $\frac{1}{M_1} \sum_{m_1=1}^{M_1} \frac{1}{M_2} \sum_{m_2=1}^{M_2} G\left(\Bar{X}^{P|N}(T)\right) \left(\omega_{1:P}^{(m_1)}, \bar{\omega}^{(m_2)}\right)$ ;
\end{algorithm}

The {computational cost} per time step per particle is $\order{P_\ell^{\gamma_p}}$ because the cost of computing the empirical average $\frac{1}{P_\ell} \sum_{j=1}^{P_\ell} \kappa \left(X_p^{P_\ell \mid N_\ell}(t_n), X_j^{P_\ell \mid N_\ell}(t_n) \right)$ in the drift and diffusion coefficients is assumed to be $\order{P_\ell^{\gamma_p}}$ for $\gamma_p > 0$. This cost is $\order{P_\ell}$ for a naïve method. Hence, the {computational cost} of simulating a $P_\ell$ particle system once using $N_\ell$ time steps with scheme~\eqref{eqn:particle_system_euler} is $\order{N_\ell P_\ell^{1+\gamma_p}}$. For a given realization of the particle system, we simulate the decoupled MV-SDE~\eqref{eqn:decoupled_mvsde} using the Euler--Maruyama scheme with the same time discretization as above. For each time step $n \in 1,\ldots,N_\ell$, the time-stepping scheme is written as

\begin{align}
	\label{eqn:dmvsde_euler}
	\bar{X}_\zeta^{P_\ell \mid N_\ell}(t_{n+1}) &= \bar{X}_\zeta^{P_\ell \mid N_\ell}(t_{n}) + b \left( \bar{X}_\zeta^{P_\ell \mid N_\ell}(t_n), \frac{1}{P_\ell} \sum_{j=1}^{P_\ell} \kappa_1 \left(\bar{X}_\zeta^{P_\ell \mid N_\ell}(t_n), X_j^{P_\ell \mid N_\ell}(t_n) \right) \right) \Delta t \nonumber \\
	&\quad + \sigma \left( \bar{X}_\zeta^{P_\ell \mid N_\ell}(t_n), \frac{1}{P_\ell} \sum_{j=1}^{P_\ell} \kappa_2 \left(\bar{X}_\zeta^{P_\ell \mid N_\ell}(t_n), X_j^{P_\ell \mid N_\ell}(t_n) \right) \right) \Delta \bar{W}(t_n), \quad n = 1,\ldots,N_\ell, \\
	 \bar{X}_\zeta^{P_\ell \mid N_\ell}(t_0) &\sim \mu_0 \cdot \nonumber
\end{align}

The {computational cost} per time step is again $\order{P_\ell^{\gamma_p}}$ due to the cost of computing the empirical average in the drift and diffusion coefficients. Hence, the {computational cost} of simulating the decoupled MV-SDE~\eqref{eqn:decoupled_mvsde} using the above scheme~\eqref{eqn:dmvsde_euler} is $\order{N_\ell P_\ell^{\gamma_p}}$. Thus, the {computational complexity} of DLMC in Algorithm~\ref{alg:dlmc_revised} per level can be written as follows:

\begin{align}
\label{eqn:work_per_level}
	\mathcal{W}_\ell &= \order{M_{1,\ell} \left( N_\ell P_\ell^{1+\gamma_p} + M_{2,\ell} \left( N_\ell P_\ell^{\gamma_p} \right) \right)} \nonumber \\
	&= \order{M_{1,\ell} N_\ell P_\ell^{1+\gamma_p} + M_{1,\ell} M_{2,\ell} N_\ell P_\ell^{\gamma_p}} \cdot
\end{align}

We obtain the expression in~\eqref{eqn:mldlmc_work} by generalizing this cost estimate for any time-stepping scheme (with a {computational complexity} of $\order{N_\ell^{\gamma_n}}$ instead of $\order{N_\ell}$) and then summing over all levels. 
}
\section{Proof of Theorem \ref{th:mldlmc_complexity}}
\label{app:1}

For given level $L \in \mathbb{N}$, \eqref{eqn:mldlmc_actual_samples} provides the optimal number of samples to satisfy the variance constraint in \eqref{eqn:mldlmc_var_constraint} for the proposed {multilevel DLMC} estimator. We bound the {multilevel DLMC} estimator \eqref{eqn:mldlmc_estimator} cost as follows: 

\begin{align}
    \mathcal{W}[\mathcal{A}_{\textrm{MLMC}}] &\lesssim \sum_{\ell=0}^L \left({M_{1,\ell}} P_\ell^{1+\gamma_p} N_\ell^{\gamma_n} + {M_{1,\ell} M_{2,\ell} } P_\ell^{\gamma_p} N_\ell^{\gamma_n}\right) \nonumber \\
    &\lesssim \sum_{\ell=0}^L \left({(\mathcal{M}_{1,\ell}+1)} P_\ell^{1+\gamma_p} N_\ell^{\gamma_n} + {(\mathcal{M}_{1,\ell}+1) \left(\frac{\tilde{\mathcal{M}}_\ell}{\lceil \mathcal{M}_{1,\ell} \rceil}+1 \right)} P_\ell^{\gamma_p} N_\ell^{\gamma_n}\right) \nonumber \\
    &\leq \underbrace{\sum_{\ell=0}^L \left(\mathcal{M}_{1,\ell} P_\ell^{1+\gamma_p} N_\ell^{\gamma_n} + \tilde{\mathcal{M}}_{\ell} P_\ell^{\gamma_p} N_\ell^{\gamma_n}\right)}_{=W_1,\text{objective function of \eqref{eqn:mldlmc_optim}}} \nonumber \\
    \label{eqn:mldlmc_work_step1}
    &+ \underbrace{\sum_{\ell=0}^L \left(P_\ell^{1+\gamma_p} N_\ell^{\gamma_n} + P_\ell^{\gamma_p} N_\ell^{\gamma_n}\right)}_{W_2,\text{cost of one realization per level}} + \underbrace{{\sum_{\ell=0}^L \mathcal{M}_{1,\ell} P_\ell^{\gamma_p} N_\ell^{\gamma_n}}}_{=W_3} + \underbrace{{\sum_{\ell=0}^L \frac{\tilde{\mathcal{M}}_\ell}{\lceil \mathcal{M}_{1,\ell} \rceil} P_\ell^{\gamma_p} N_\ell^{\gamma_n}}}_{=W_4} \cdot
\end{align}

Because $P_\ell > 1$ and $\gamma_p > 0$, $W_3$ is always dominated by the first term in $W_1$ (i.e., $\sum_{\ell=0}^L \mathcal{M}_{1,\ell} P_\ell^{1+\gamma_p} N_\ell^{\gamma_n}$). We analyze each term individually. By substituting \eqref{eqn:mldlmc_optimal_samples} in $W_1$,

\begin{equation}
    \label{eqn:mldlmc_work_step2}
    W_1 = \frac{C_\nu^2}{(1-\theta)^2 \tol_\mathrm{r}^2 \abs{\E{G}}^2} \left(\sum_{\ell=0}^L \sqrt{P_\ell^{\gamma_p} N_\ell^{\gamma_n}}(\sqrt{V_{1,\ell}P_\ell} + \sqrt{V_{2,\ell}})\right)^2 \cdot
\end{equation}

By selecting level $L$ that satisfies \eqref{eqn:mldlmc_bias_constraint} and using Assumption~\ref{ass:mldlmc_bias},

\begin{equation}
    \label{eqn:mldlmc_work_step3}
    \abs{\mathbb{E}[G-G_L]} \leq C_b \tau^{-\Tilde{\alpha} L} \approx \theta\tol_\mathrm{r} \abs{\E{G}},
\end{equation}
and hence,
\begin{equation}
    \label{eqn:mldlmc_optim_level}
    L = \left\lceil \frac{1}{\Tilde{\alpha}}\log(\frac{C_b}{\theta \abs{\E{G}}}\tol_\mathrm{r}^{-1}) \right\rceil \cdot
\end{equation}

Using the hierarchies \eqref{eqn:mldlmc_hierarchy} and Assumption~\ref{ass:mldlmc_var} in \eqref{eqn:mldlmc_work_step2}, 

\begin{equation}
    \label{eqn:mldlmc_work_step4}
    W_1 \lesssim {\tol_\mathrm{r}^{-2}\left(\sum_{\ell=0}^L \tau^{\frac{1+\gamma_p+\gamma_n-\Tilde{w}}{2}\ell} + \tau^{\frac{\gamma_p+\gamma_n-\Tilde{s}}{2}\ell}\right)^2} \cdot
\end{equation}

The summation in \eqref{eqn:mldlmc_work_step4} has two terms; thus, we have the following two cases:

\begin{itemize}
    \item \textbf{Case 1: } $\Tilde{s}+1 \geq \Tilde{w}$, i.e., the first term dominates, and $W_1$ can be expressed as 
    \begin{equation}
        \label{eqn:mldlmc_work_case1_v1}
        W_1 \lesssim {\tol_\mathrm{r}^{-2}\left(\sum_{\ell=0}^L \tau^{\frac{1+\gamma_p+\gamma_n-\Tilde{w}}{2}\ell}\right)^2} , 
    \end{equation}
and simplified using \eqref{eqn:mldlmc_optim_level} and the sum of a geometric series,
    \begin{equation}
        \label{eqn:mldlmc_work_case1_v2}
        W_1 \lesssim \begin{cases}
            {\tol_\mathrm{r}^{-2}} &\quad,{\Tilde{w}>1+\gamma_p+\gamma_n} \\
            {\tol_\mathrm{r}^{-2}(\log\tol_\mathrm{r}^{-1})^2} &\quad, {\Tilde{w}=1+\gamma_p+\gamma_n} \\
            {\tol_\mathrm{r}^{-2-\left(\frac{1+\gamma_p+\gamma_n-\Tilde{w}}{\Tilde{\alpha}}\right)}} &\quad,{\Tilde{w}<1+\gamma_p+\gamma_n}
        \end{cases} , 
    \end{equation}
which  can be expressed more compactly:
    \begin{equation}
        \label{eqn:mldlmc_work_case1_v3}
        W_1 \lesssim {\tol_\mathrm{r}^{-2-\max\left(0,\frac{1+\gamma_p+\gamma_n-\Tilde{w}}{\Tilde{\alpha}}\right)}(\log\tol_\mathrm{r}^{-1})^{2 \mathcal{J}_1 }} ,
    \end{equation}
 {
    where 
    \begin{equation*}
    	\mathcal{J}_1 = \begin{cases}
    	1, &\quad \text{if } \tilde{w}=1+\gamma_p+\gamma_n, \\
    	0, &\quad \text{else}.
    	\end{cases}
    \end{equation*}
    }

    \item \textbf{Case 2: } $\Tilde{s}+1 < \Tilde{w}$, i.e., the second term dominates, and express $W_1$ as 
    \begin{equation}
        \label{eqn:mldlmc_work_case2_v1}
        W_1 \lesssim {\tol_\mathrm{r}^{-2}\left(\sum_{\ell=0}^L \beta^{\frac{\gamma_p+\gamma_n-\Tilde{s}}{2}\ell}\right)^2} , 
    \end{equation}
and simplify using the sum of a geometric series,
    \begin{equation}
        \label{eqn:mldlmc_work_case2_v2}
        W_1 \lesssim \begin{cases}
            {\tol_\mathrm{r}^{-2}} &\quad,{\Tilde{s}>\gamma_p+\gamma_n} \\
            {\tol_\mathrm{r}^{-2}(\log\tol_\mathrm{r}^{-1})^2} &\quad, {\Tilde{s}=\gamma_p+\gamma_n} \\
            {\tol_\mathrm{r}^{-2-\left(\frac{\gamma_p+\gamma_n-\Tilde{s}}{\Tilde{\alpha}}\right)}} &\quad,{\Tilde{s}<\gamma_p+\gamma_n}
        \end{cases} , 
    \end{equation}
which can be expressed more compactly:
    \begin{equation}
        \label{eqn:mldlmc_work_case2_v3}
        W_1 \lesssim {\tol_\mathrm{r}^{-2-\max\left(0,\frac{\gamma_p+\gamma_n-\Tilde{s}}{\Tilde{\alpha}}\right)}(\log\tol_\mathrm{r}^{-1})^{2 \mathcal{J}_2 }} ,
    \end{equation}
    {
    where 
    \begin{equation*}
    	\mathcal{J}_2 = \begin{cases}
    	1, &\quad \text{if } \tilde{s}=\gamma_p+\gamma_n, \\
    	0, &\quad \text{else}.
    	\end{cases}
    \end{equation*}}
    In this case, $W_3$ is of a lower order than the first term in $W_1$; therefore, $W_1$ dominates as $\tol_\mathrm{r} \rightarrow 0$.
  
\end{itemize}

Using \eqref{eqn:mldlmc_hierarchy}, for $W_2$, 
\begin{align}
    \label{eqn:mldlmc_work_step5}
    W_2 &\lesssim {\tol_\mathrm{r}^{-\left(\frac{1+\gamma_p+\gamma_n}{\Tilde{\alpha}}\right)}} \cdot
\end{align}

Next, we examine $W_4$:

\begin{align}
		W_4 &= \sum_{\ell=0}^L \frac{\tilde{\mathcal{M}}_\ell}{\lceil \mathcal{M}_{1,\ell} \rceil} P_\ell^{\gamma_p} N_\ell^{\gamma_n} \nonumber \\
		&\leq \sum_{\ell=0}^L \frac{\tilde{\mathcal{M}}_\ell}{\max\{1,\mathcal{M}_{1,\ell}\}} P_\ell^{\gamma_p} N_\ell^{\gamma_n} \nonumber\\
		&= \sum_{\ell=0}^L \min\{\tilde{\mathcal{M}}_\ell P_\ell^{\gamma_p} N_\ell^{\gamma_n}, \frac{\tilde{\mathcal{M}}_\ell}{\mathcal{M}_{1,\ell}}P_\ell^{\gamma_p} N_\ell^{\gamma_n}\} \nonumber\\
		\label{mldlmc_work_step6}
		&\leq \sum_{\ell=0}^L \tilde{\mathcal{M}}_\ell P_\ell^{\gamma_p} N_\ell^{\gamma_n} \cdot
\end{align}

$W_4$ has the same or lower order than that of $W_1$. Next, we must ensure that $W_1$ dominates $W_2$ as $\tol_\mathrm{r} \rightarrow 0$ for the proposed {multilevel DLMC} method to be feasible. Comparing \eqref{eqn:mldlmc_work_step5} to $W_1$ for the cases, the following condition ensures $W_1$ is the dominant term:

\begin{equation}
    \label{eqn:mldlmc_work_step7}
    \tilde{\alpha} \geq \frac{1}{2} \min (\tilde{w}, 1+\tilde{s}, 1+\gamma_p+\gamma_n)  \cdot
\end{equation}

Thus, \eqref{eqn:mldlmc_work_step7}, \eqref{eqn:mldlmc_work_case1_v3}, and \eqref{eqn:mldlmc_work_case2_v3} complete the proof.

\section{Estimating level differences for the multilevel double loop Monte Carlo estimator}
\label{app:2}

\begin{algorithm}[H] 
    \caption{Importance sampling scheme to estimate $\E{\Delta G_\ell}$ using antithetic sampler}
    \label{alg:mldlmc_ld}
    \SetAlgoLined
    \textbf{Inputs: } $\ell,M_1,M_2,\zeta(\cdot,\cdot)$; \\
    \For{$m_1=1,\ldots,M_1$}{
        Generate $\mu_\ell(\omega_{1:P_\ell}^{(\ell,m_1)})$ using \eqref{eqn:dmvsde_discrete_law}; \\
        \For{$a=1,\ldots,\tau$}{
            Generate $\mu_{\ell-1}^{(a)}(\omega_{(a-1)P_{\ell-1}+1:aP_{\ell-1}}^{(\ell,m_1)})$ using \eqref{eqn:dmvsde_discrete_law}; 
        } 
        \For{$m_2=1,\ldots,M_2$}{
            Given $\mu_\ell(\omega_{1:P_\ell}^{(\ell,m_1)})$ and $\zeta(\cdot,\cdot)$, generate sample path of \eqref{eqn:dmvsde_sde_is} at level $\ell$ with $\bar{\omega}^{(\ell,m_2)}$;\\
            Compute $G_\ell^\mathrm{IS}(\omega_{1:P_\ell}^{(\ell,m_1)},\bar{\omega}^{(\ell,m_2)})$; \\
            \For{$a=1,\ldots,\tau$}{
                Given $\mu_{\ell-1}^{(a)}(\omega_{(a-1)P_{\ell-1}+1:aP_{\ell-1}}^{(\ell,m_1)})$ and $\zeta(\cdot,\cdot)$, generate sample path of $\eqref{eqn:dmvsde_sde_is}$ at level $\ell-1$ with $\bar{\omega}^{(\ell,m_2)}$; \\
                Compute $G_{\ell-1}^\mathrm{IS}(\omega_{(a-1)P_{\ell-1}+1:aP_{\ell-1}}^{(\ell,m_1)},\bar{\omega}^{(\ell,m_2)})$; \\  
            }
            $\hat{\mathcal{G}}_{\ell-1}^\mathrm{IS}(\omega_{1:P_\ell}^{(\ell,m_1)},\bar{\omega}^{(\ell,m_2)}) = \frac{1}{\tau} \sum_{a=1}^{\tau} G_{\ell-1}^\mathrm{IS}(\omega_{(a-1)P_{\ell-1}+1:aP_{\ell-1}}^{(\ell,m_1)},\bar{\omega}^{(\ell,m_2)})$;\\
        }
        $\Delta G_\ell^{(m_1,m_2)} = (G_\ell^\mathrm{IS} - \hat{\mathcal{G}}_{\ell-1}^\mathrm{IS})(\omega_{1:P_\ell}^{(\ell,m_1)},\bar{\omega}^{(\ell,m_2)})$;\\
        }
    Approximate $\E{G_{\ell} -G_{\ell-1}}$ by $\frac{1}{M_1} \sum_{m_1=1}^{M_1} \frac{1}{M_2} \sum_{m_2=1}^{M_2} \Delta G_\ell^{(m_1,m_2)}$; \\
\end{algorithm}

\section{Estimating variances for the adaptive multilevel double loop Monte Carlo algorithm}
\label{app:3}

\begin{algorithm}[H] 
    \label{alg:mldlmc_est_variance}
    \caption{Estimating $V_{1,\ell}$ and $V_{2,\ell}$ for adaptive {multilevel DLMC}}
    \SetAlgoLined
    \textbf{Inputs: } $\ell,M_1,M_2,\zeta(\cdot,\cdot)$; \\
    \For{$m_1=1,\ldots,M_1$}{
        Generate $\mu_\ell(\omega_{1:P_\ell}^{(\ell,m_1)})$ using \eqref{eqn:dmvsde_discrete_law}; \\
        \For{$a=1,\ldots,\tau$}{
            Generate $\mu_{\ell-1}^{(a)}(\omega_{(a-1)P_{\ell-1}+1:aP_{\ell-1}}^{(\ell,m_1)})$ using \eqref{eqn:dmvsde_discrete_law}; 
        } 
        \For{$m_2=1,\ldots,M_2$}{
            Given $\mu_\ell(\omega_{1:P_\ell}^{(\ell,m_1)})$ and $\zeta(\cdot,\cdot)$, generate sample path of \eqref{eqn:decoupled_mvsde} at level $\ell$ with $\bar{\omega}^{(\ell,m_2)}$;\\
            Compute $G_\ell^\mathrm{IS}(\omega_{1:P_\ell}^{(\ell,m_1)},\bar{\omega}^{(\ell,m_2)})$; \\
            \For{$a=1,\ldots,\tau$}{
                Given $\mu_{\ell-1}^{(a)}(\omega_{(a-1)P_{\ell-1}+1:aP_{\ell-1}}^{(\ell,m_1)})$ and $\zeta(\cdot,\cdot)$, generate sample path of $\eqref{eqn:decoupled_mvsde}$ at level $\ell-1$ with $\bar{\omega}^{(\ell,m_2)}$; \\
                Compute $G_{\ell-1}^\mathrm{IS}(\omega_{(a-1)P_{\ell-1}+1:aP_{\ell-1}}^{(\ell,m_1)},\bar{\omega}^{(\ell,m_2)})$; \\  
            }
            $\hat{\mathcal{G}}_{\ell-1}^\mathrm{IS}(\omega_{1:P_\ell}^{(\ell,m_1)},\bar{\omega}^{(\ell,m_2)}) = \frac{1}{\tau} \sum_{a=1}^{\tau} G_{\ell-1}^\mathrm{IS}(\omega_{(a-1)P_{\ell-1}+1:aP_{\ell-1}}^{(\ell,m_1)},\bar{\omega}^{(\ell,m_2)})$;\\
        }
        $\Delta G_\ell^{(m_1,m_2)} = (G_\ell^\mathrm{IS} - \hat{\mathcal{G}}_{\ell-1}^\mathrm{IS})(\omega_{1:P_\ell}^{(\ell,m_1)},\bar{\omega}^{(\ell,m_2)})$;\\
        Approximate $\E{\Delta G_\ell \mid \{\mu_\ell,\mu_{\ell-1}\}(\omega_{1:P_\ell}^{(\ell,m_1)})}$ by $\frac{1}{M_2} \sum_{m_2=1}^{M_2} \Delta G_\ell^{(m_1,m_2)}$; \\
        Approximate $\Var{\Delta G_\ell \mid \{\mu_\ell,\mu_{\ell-1}\}(\omega_{1:P_\ell}^{(\ell,m_1)})}$ by sample variance of $\left\lbrace \Delta G_\ell^{(m_1,m_2)} \right\rbrace_{m_2=1}^{M_2}$ ; \\
    }
    Approximate $V_{1,\ell}$ by sample variance of  $\left\lbrace \E{\Delta G_\ell \mid \{\mu_\ell,\mu_{\ell-1}\}(\omega_{1:P_\ell}^{(\ell,m_1)})} \right\rbrace_{m_1=1}^{M_1} $; \\
    Approximate $V_{2,\ell}$ by $\frac{1}{M_1} \sum_{m_1=1}^{M_1} \Var{\Delta G_\ell \mid \{\mu_\ell,\mu_{\ell-1}\}(\omega_{1:P_\ell}^{(\ell,m_1)})}$.
\end{algorithm}

\bibliography{References}
\bibliographystyle{plainnat}
\setcitestyle{authoryear,open={[},close={]}}
\end{document}